\tikzset{
    auto,node distance =1 cm and 1 cm,semithick,
    state/.style ={ellipse, draw, minimum width = 0.7 cm}
}
\tikzset{
    auto,node distance =1 cm and 1 cm,semithick,
    state/.style ={ellipse, draw, minimum width = 0.7 cm}
}
\newtheorem{theorem}{Theorem}
\theoremstyle{definition}
\newtheorem{definition}{Definition}
\newtheorem{remark}{Remark}
\newtheorem{example}{Example}
\newtheorem{lemma}{Lemma}
\newtheorem{corollary}{Corollary}
\newtheorem{proposition}{Proposition}
\newcommand{\fq}{\mathbb{F}_q}
\newcommand{\F}{\mathbb{F}}
\begin{document}

\title{Positive-Definite Matrices over Finite Fields}
\markright{Positive-Definite Matrices over Finite Fields}

\author{Joshua Cooper, Erin Hanna, and Hays Whitlatch}

\date{February 07, 2022}

\maketitle

\begin{abstract}
The study of positive-definite matrices has focused on Hermitian matrices, that is, square matrices with complex (or real) entries that are equal to their own conjugate transposes.  In the classical setting, positive-definite matrices enjoy a multitude of equivalent definitions and properties.  In this paper, we investigate when a square, symmetric matrix with entries coming from a finite field can be called ``positive-definite'' and discuss which of the classical equivalences and implications carry over.
\end{abstract}

\section{Introduction}
Our goal in this paper is to introduce and investigate the concept of positive-definiteness over finite fields.  What does it mean to be positive in a finite field?  Are exactly half of the non-zero elements positive? Do the equivalences afforded to Hermitian positive-definite matrices carry over to finite fields? And naturally, why is this topic interesting, and how did this come up?  We encourage the reader to pause and think about these questions before reading our proposed framework for addressing them.

The theory of positive-definiteness is vast and reaches throughout many branches of mathematics. Positive-definite matrices are used in optimization algorithms, to study convexity with multi-variable functions, in the construction of some types of linear regression models, and in principal component analysis.  Positive-definite Hermitian matrices admit a ``Cholesky decomposition'', that is, the matrix can be expressed as the product of a lower triangular matrix and its conjugate transpose.  This decomposition can be used to efficiently solve a system of linear equations or to quickly acquire numerical solutions via repeated random sampling in a computational algorithm such as Monte Carlo simulation.

Our interest in positive-definite matrices emerged from a very different application in which the success of a graph theoretical operation (called ``pressing'') corresponds to the existence of a Cholesky decomposition for its adjacency matrix (over the finite field $\mathbb{F}_2$).  This correspondence was first shown in \cite{CooperDavis} and was further interpreted in \cite{CooperWhitlatch}.  The topic originated via an application in bioinformatics (see \cite{bixby2015proving}, \cite{hannenhalli1999transforming}), where pressing sequences correspond to sortings by reversal of DNA sequences.  We elaborate briefly on the connection with matrix algebra below as it is relevant to the discussion in Section \ref{pressingsection}.

\begin{definition}
A \textbf{bicolored graph} $G = (G,c)$ is a simple graph $G$ with $c: V(G) \rightarrow \{\text{blue}, \text{white}\}$ which assigns a color to each vertex.\footnote{Some authors use black and white instead.}  The complement of blue is white and the complement of white is blue. For 
$v \in V(G)$,  \textbf{pressing} a blue vertex $v$ is the operation of transforming $(G,c)$ to $(G',c')$, a new bicolored graph in which $G[N_G(v) \cup \{v\}]$ is complemented.  That is, $V(G) = V(G')$ and $$E(G')= E(G) \triangle \begin{pmatrix}
N_G(v) \cup \{v\} \\
2
\end{pmatrix}$$
where $\triangle$ denotes symmetric difference, $N_G(v)$ is the neighborhood of $v$ in $G$, $c'(w)$ is the complement of $c(w)$ for $w \in N_G(v) \cup \{v\}$, and $c'(w) = c(w)$ otherwise.  This definition is illustrated in Example \ref{pressingexample} where one can see that pressing a blue vertex changes its color to white and flips the colors of its neighbors, isolates the pressed vertex, and complements the set of edges induced by its neighbors.
\end{definition}

A sequence of presses is referred to as a \textbf{pressing sequence}.  Since a pressed vertex becomes isolated, it may not be pressed again, and will not be affected by future presses.  Thus, every pressing sequence is finite in length.  If the end result of a pressing sequence is the empty, edgeless, colorless graph then we say that it was a \textbf{successful pressing sequence}.  As it turns out, a pressing sequence is successful exactly when the graph's adjacency matrix (with row and column order dictated by the pressing sequence) has a Cholesky decomposition over $\mathbb{F}_2$.  In fact, the Cholesky decomposition gives the instructions for pressing, revealing at each press which vertices would be affected.

\begin{example}\label{pressingexample}
Below is a bicolored graph with successful pressing sequence 1,2,3,4 (blue vertices are shown in gray). 

\begin{tikzpicture}[scale=.7, transform shape]

\node[state, fill=gray] (1) at (0, -.5) {1};
\node[state, fill=white] (2) at (2,-2) {2};
\node[state, fill=gray] (4) at (1,-4) {4};
\node[state, fill=white] (5) at (-1,-4) {5};
\node[state, fill=gray] (3) at (-2,-2) {3};

\draw (1) -- (2);
\draw (1) -- (5);
\draw (1) -- (3);
\draw (2) -- (4);
\draw (4) -- (5);
\draw (5) -- (3);
\end{tikzpicture}
$\overset{\text{Press 1}} \longrightarrow$
\begin{tikzpicture}[scale=.7, transform shape]

\node[state, fill=white] (1) at (0, -.5) {1};
\node[state, fill=gray] (2) at (2,-2) {2};
\node[state, fill=gray] (4) at (1,-4) {4};
\node[state, fill=gray] (5) at (-1,-4) {5};
\node[state, fill=white] (3) at (-2,-2) {3};

\draw (2) -- (3);
\draw (2) -- (4);
\draw (2) -- (5);
\draw (4) -- (5);
\end{tikzpicture}
$\overset{\text{Press 2}} \longrightarrow$
\begin{tikzpicture}[scale=.7, transform shape]

\node[state, fill=white] (1) at (0, -.5) {1};
\node[state, fill=white] (2) at (2,-2) {2};
\node[state, fill=white] (4) at (1,-4) {4};
\node[state, fill=white] (5) at (-1,-4) {5};
\node[state, fill=gray] (3) at (-2,-2) {3};

\draw (4) -- (3);
\draw (5) -- (3);
\end{tikzpicture}
\vspace{4mm}

\begin{center}
$\overset{\text{Press 3}} \longrightarrow$
\begin{tikzpicture}[scale=.7, transform shape]

\node[state, fill=white] (1) at (0, -.5) {1};
\node[state, fill=white] (2) at (2,-2) {2};
\node[state, fill=gray] (4) at (1,-4) {4};
\node[state, fill=gray] (5) at (-1,-4) {5};
\node[state, fill=white] (3) at (-2,-2) {3};

\draw (4) -- (5);
\end{tikzpicture}
$\overset{\text{Press 4}} \longrightarrow$
\begin{tikzpicture}[scale=.7, transform shape]

\node[state, fill=white] (1) at (0, -.5) {1};
\node[state, fill=white] (2) at (2,-2) {2};
\node[state, fill=white] (4) at (1,-4) {4};
\node[state, fill=white] (5) at (-1,-4) {5};
\node[state, fill=white] (3) at (-2,-2) {3};

\end{tikzpicture}
\end{center}
The adjacency matrix of this graph (with 1's on the diagonal when the vertex is blue) and the Cholesky decomposition corresponding to this pressing sequence (with rows and columns labelled by 1, 2, 3, 4, 5) is 
$$\left[\begin{array}{ccccc}
     1 & 1 & 1 & 0 & 1 \\
     1 & 0 & 0 & 1 & 0 \\
     1 & 0 & 1 & 0 & 1 \\
     0 & 1 & 0 & 1 & 1 \\
     1 & 0 & 1 & 1 & 0 \\
\end{array}\right]=\left[\begin{array}{ccccc}
     1 & 1 & 1 & 0 & 1 \\
     0 & 1 & 1 & 1 & 1 \\
     0 & 0 & 1 & 1 & 1 \\
     0 & 0 & 0 & 1 & 1 \\
     0 & 0 & 0 & 0 & 0 \\
\end{array}\right]^T\left[\begin{array}{ccccc}
     1 & 1 & 1 & 0 & 1 \\
     0 & 1 & 1 & 1 & 1 \\
     0 & 0 & 1 & 1 & 1 \\
     0 & 0 & 0 & 1 & 1 \\
     0 & 0 & 0 & 0 & 0 \\
\end{array}\right]$$
Observe that in the upper diagonal matrix, the $j^\textrm{th}$ entry of the $i^\textrm{th}$ row is $1$ exactly when pressing $i$ changed the state of $j$.
\end{example}

In the introduction we briefly discussed that Hermitian positive-definite matrices enjoy a multitude of theoretical and computational interpretations.  As such, there are several equivalent definitions for a positive-definite Hermitian matrix and the choice of which definition to use is often determined by the desired application.  We will see in Proposition \ref{stack} that not all of these definitions make sense over finite fields.

\begin{definition}(\cite{johnson1985matrix})
A $n\times n$ Hermitian matrix $A$ is said to be positive-definite if any (and hence all) of the following hold:
\begin{enumerate}
    \item $z^*Az >0$ for all non-zero column vectors $z\in \mathbb{C}^n$;
    \item $A$ has positive eigenvalues;
    \item The associated sesquilinear form is an inner product;
    \item $A$ is the Gram matrix of linearly independent vectors;
    \item All leading principal minors of $A$ are positive;
    \item $A$ has a unique Cholesky decomposition.
\end{enumerate}
\end{definition}

In addition, Hermitian positive-definite matrices enjoy numerous useful properties, only some of which will carry over to finite-fields. If $A$ and $B$ are positive-definite $n\times n$ Hermitian matrices, then the following statements hold (\cite{johnson1985matrix}):
\begin{enumerate}
    \item $A$ is invertible;
    \item $A^{-1}$ is positive-definite;
    \item $rA$  is positive-definite ($r>0$ a real number);
    \item $ABA$ and $BAB$ are positive-definite;
    \item if $AB=BA$ then $AB$ is positive-definite;
    \item every principal submatrix of $A$ is positive-definite;
    \item the Hadamard product $A\circ B$ and the Kronecker product $A \otimes B$ are positive-definite;
    \item the Frobenius product $A : B\geq 0$.
\end{enumerate}

\section{Defining Positive-Definiteness in a Finite Field}

In this section we will introduce the notion of positive-definiteness over a finite field.  In Section 1 we presented several equivalent definitions that work over $\mathbb{C}$.  Among these, the positivity of $z^*Az$ for all non-zero column vectors $z$ is perhaps the most commonly used definition for positive-definiteness (see for example \cite{johnson1985matrix}).  Over a finite field, this definition fails to translate meaningfully for a few reasons.  First,  there is the ambiguity of the statement $z^*Az >0$, which should generally mean something different than $z^*Az\neq 0$.  We will resolve this issue in Definition \ref{define_positive}.
However, the following proposition demonstrates that, even if we resolve the previous issue, the definition will not apply to any finite field.

\begin{proposition}\label{stack} Suppose $\mathbb{F}$ is a finite field, and $A$ is an $n \times n$ matrix over $\mathbb{F}$ for $n \geq 3$.   Define $Q:\mathbb{F}^n\rightarrow \mathbb{F}$ by $Q(x)=x^TAx$.  Then there exists a non-zero vector $v$ so that $Q(v) = 0$.
\end{proposition}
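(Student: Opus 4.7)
The plan is to recognize $Q(x) = x^T A x = \sum_{i,j=1}^n a_{ij} x_i x_j$ as a single homogeneous polynomial of degree $2$ in the $n$ variables $x_1, \ldots, x_n$, and then invoke the Chevalley--Warning theorem. That theorem asserts that if $f_1, \ldots, f_r \in \mathbb{F}[x_1, \ldots, x_n]$ satisfy $\sum_{i} \deg f_i < n$, then the number of common zeros of the $f_i$ in $\mathbb{F}^n$ is divisible by the characteristic $p$ of $\mathbb{F}$. Taking $r = 1$ and $f_1 = Q$, the hypothesis becomes $2 < n$, which is satisfied precisely because the proposition assumes $n \geq 3$.

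Once the theorem applies, the conclusion is immediate: the zero vector is always a solution of $Q(x) = 0$ since $Q$ is homogeneous, so the set of solutions is non-empty, and divisibility by $p \geq 2$ forces it to have size at least $p$. Hence there exists at least one non-zero $v \in \mathbb{F}^n$ with $Q(v) = 0$.

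There is essentially no obstacle here; the main things to note are (a) that Chevalley--Warning applies uniformly regardless of whether $A$ is symmetric or invertible, since we only need the degree and homogeneity of $Q$, and (b) that the bound $n \geq 3$ in the hypothesis is exactly tight for this argument, matching the ``$n > \deg f$'' condition. If one wished to avoid citing Chevalley--Warning, an alternative plan would be to split into two cases: when $A$ is singular, any non-zero vector in $\ker A$ works, and when $A$ is non-singular one would reduce $Q$ to a diagonal form (over characteristic $\neq 2$, via a congruence transformation, and using an appropriate normal form in characteristic $2$) and then use the classification of quadratic forms over finite fields to show that any form in $\geq 3$ variables is isotropic. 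The Chevalley--Warning route is considerably cleaner and sidesteps the characteristic-$2$ subtleties entirely.
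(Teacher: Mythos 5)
Your proof is correct and takes essentially the same route as the paper, which also applies the Chevalley(--Warning) theorem to the single degree-$2$ polynomial $Q$ with the hypothesis $n \geq 3 > 2$; you merely phrase the conclusion via Warning's divisibility-by-$p$ refinement rather than Chevalley's direct existence statement. No gaps.
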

\begin{proof}
Chevalley's Theorem (\cite{chevalley1935demonstration}) states that, for a collection of polynomials $\{f_1,\ldots,f_m\} \subseteq \mathbb{F}[X_1,\ldots,X_n]$ with $n > \sum_{j=1}^m \deg f_i$, if $f_i(0,\ldots,0) = 0$ for all $i$, then there exists a nonzero $v$ so that $f_i(v) = 0$ for all $i$.  Take $m=1$ and $f_1(x) = Q(x)$, where we are treating the coordinates of $x$ as the variables $X_i$ on the left-hand side.  Then, since $n \geq 3 > 2 = \deg f_1$, and $Q(0,\ldots,0)=0$, the proposition follows.
\end{proof}

The previous proposition demonstrates that it is not possible to extend all of the definitions (and implications) of Hermitian positive-definiteness to finite fields.  Before proceeding, we must first resolve the question of what it means to be positive in a finite field.

\begin{definition}\label{define_positive}
For $x\in \mathbb{F}$, we say $x$ is \textbf{positive} if $x=\mu^2$ for some $\mu\neq 0 \in \mathbb{F}$ and we say $\mu$ is a square root of $x$.  If $\mu$ is also positive then we say $\mu$ is a positive square root of $x$.  
\end{definition}

\begin{definition}
Define a field $\mathbb{F}$ to be a \textbf{definite field} if each positive element has a positive square root.  If the field $\mathbb{F}$ is finite then we will refer to it as a \textbf{finite definite field}.
\end{definition}
\begin{remark}
Observe that $2$ is not positive in $\mathbb{F}_3$ but $2$ is positive in $\mathbb{F}_7$.  We strongly considered referring to the positive elements of $\mathbb{F}_q$ as ``$q$-positive'' elements and pronouncing $q-$positive as ``quositive''.  
\end{remark}
\begin{example}
$\mathbb{R}$ is a definite field:  Let $x=\mu^2$ for some $\mu\neq 0 \in \mathbb{R}$ and observe that   $x=(|\mu|)^2$ as well and $|\mu|\neq 0 \in \mathbb{R}$.  However $\sqrt{|\mu|}\neq 0 \in \mathbb{R}$ satisfies that $|\mu|=(\sqrt{|\mu|})^2$ so $x$ is positive and $|\mu|$ is a positive square root of $x$.
\end{example}
\begin{example}
$\mathbb{F}_3$ is a definite field: The non-zero elements are $1$ and $2$.  Since $1^2=1$ and $2^2=1$ we see that $1$ is positive and $2$ is not positive (it has no square root).  So if $x$ is positive then $x=1$, setting $\mu=1$ shows that $x$ has a positive square root. 
\end{example}
The following example demonstrates that not all finite fields are definite.
\begin{example}
Consider $f:\mathbb{F}_5\rightarrow\mathbb{F}_5$ by $f(x)=x^2$.  Then $f(0)=0$, $f(1)=f(4)=1$, and $f(2)=f(3)=4$.  Then, $1$ and $4$ are the positive elements of $\mathbb{F}_5$.  Observe that $1$ has two positive square roots (itself and $4$), however both of the square roots of $4$ are non-positive.  Therefore, $\mathbb{F}_5$ is not a definite field.
\end{example}

\begin{definition}{(\cite{hardy1979introduction})}\label{defineLegendre} The \textbf{Legendre symbol} $\left(\frac{a}{p}\right)$ for an integer $a$ and an odd prime $p$ is defined as
$$
\left(\frac{a}{p}\right)=\begin{cases}
1&\textrm{if there exists a nonzero $x$ such that }x^2\equiv a \pmod{p}\\ 
0&\textrm{if }a\equiv 0 \pmod{p}\\
-1&\textrm{otherwise }\\
\end{cases}
$$
\end{definition}
\begin{lemma}[\cite{hardy1979introduction}]\label{legendre}
Let $p$ be an odd prime.  The quadratic character of $-1$ modulo $p$ depends only on whether $p$ is $1$ or $3$ modulo $4$.  That is, $$\left(\frac{-1}{p}\right)=\begin{cases}
1&\textrm{if }p\equiv 1 \pmod{4}\\ -1&\textrm{if }p\equiv 3 \pmod{4}\\
\end{cases}$$
\end{lemma}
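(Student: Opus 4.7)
The plan is to prove the lemma via Euler's criterion, which characterizes quadratic residues modulo a prime through exponentiation. Specifically, I would establish that for any nonzero $a \in \mathbb{F}_p$,
\[
\left(\frac{a}{p}\right) \equiv a^{(p-1)/2} \pmod{p},
\]
and then specialize to $a = -1$.

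First I would recall that $\mathbb{F}_p^*$ is cyclic of order $p-1$; fix a generator $g$. Every nonzero element has the form $g^k$, and $g^k$ is a square in $\mathbb{F}_p^*$ precisely when $k$ is even. Raising to the $(p-1)/2$ power then gives $g^{k(p-1)/2}$, which equals $1$ when $k$ is even and equals $g^{(p-1)/2}$ when $k$ is odd. Since $g^{p-1} = 1$ and $g^{(p-1)/2} \ne 1$, the latter value must be $-1$ (the unique square root of $1$ other than $1$). This establishes Euler's criterion.

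Next I would apply this to $a = -1$, obtaining $\left(\frac{-1}{p}\right) \equiv (-1)^{(p-1)/2} \pmod p$. Since both sides lie in $\{-1,+1\}$ and $p$ is odd, the congruence is an equality in $\mathbb{Z}$. Finally, split into cases: if $p \equiv 1 \pmod 4$, then $(p-1)/2$ is even and the value is $1$; if $p \equiv 3 \pmod 4$, then $(p-1)/2$ is odd and the value is $-1$. This yields the stated dichotomy.

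No step here is genuinely difficult; the main conceptual content is Euler's criterion, which is already a standard result in \cite{hardy1979introduction}. If one wishes to avoid invoking the cyclicity of $\mathbb{F}_p^*$, a clean alternative is to pair nonzero elements by $x \leftrightarrow -x$ to count squares directly, showing that $-1$ is a square iff the number of residues in $\{1,\dots,(p-1)/2\}$ whose additive inverse is also in this range has the right parity; but the Euler-criterion route is cleanest and most transparent.
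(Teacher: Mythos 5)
Your proof is correct. The paper does not actually prove this lemma --- it is stated as a cited classical fact from Hardy and Wright --- so there is no in-paper argument to compare against; your derivation via Euler's criterion is the standard one and is complete, including the two points that are easy to gloss over (that $g^{(p-1)/2}$ must equal $-1$ because it is a square root of $1$ other than $1$, and that the congruence $\left(\frac{-1}{p}\right) \equiv (-1)^{(p-1)/2} \pmod{p}$ upgrades to an equality in $\mathbb{Z}$ since $p$ is odd).
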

Lemma \ref{legendre}  can be used to show that $\mathbb{F}_p$ ($p$ an odd prime) is definite if and only if $p\equiv 3\pmod{4}$.  Suppose first that $p\equiv 3\pmod{4}$ and let $a$ be a positive element of $\mathbb{F}_p$.  Then for some $\mu\neq 0\in \mathbb{F}_p$ we have $\mu^2=a$ and therefore $(-\mu)^2=a$ as well.  Since the Legendre symbol is multiplicative $$\left(\frac{\mu}{p}\right)=\left(\frac{-1}{p}\right)\left(\frac{-\mu}{p}\right)=-\left(\frac{-\mu}{p}\right)$$ so either $\left(\frac{\mu}{p}\right)=1$ or $\left(\frac{-\mu}{p}\right)=1$ which implies that $a$ has a positive square root and therefore $\mathbb{F}_p$ is a definite field.  Suppose now that $p\equiv 1\pmod{4}$.   Since squaring (non-zero elements) is a two-to-one function, it is not possible for every element to be positive.  Let $b$ be a non-positive element of $\mathbb{F}_p\setminus\{0\}$ and let $a=b^2$ (so $a$ is positive).  Now  $$\left(\frac{b}{p}\right)=\left(\frac{-1}{p}\right)\left(\frac{-b}{p}\right)=\left(\frac{-b}{p}\right)$$ so $-b$ is a non-positive as well.  It follows (by the Fundamental Theorem of Algebra) that $a$ does not have a positive square root and therefore $\mathbb{F}_p$ is not a definite field.  Unfortunately this simple argument doesn't extend to $\mathbb{F}_q$ where $q$ is a proper prime power.  To see this one simply needs to observe that the non-prime analog of the Legendre symbol would have (weaker) implications for $\mathbb{Z}/q\mathbb{Z}$ but not for $\mathbb{F}_q$.  
\begin{theorem}\label{finite_definite_fields}
Let $\fq$ be a finite field. $\fq$ is a finite definite field if and only if  $\fq$ has characteristic two or $q=p^k$ where $k$ is a positive odd integer and $p\equiv 3 \pmod{4}$.
\end{theorem}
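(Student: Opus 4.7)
The plan is to split into the characteristic-two and odd-characteristic cases, and in the odd case replace the Legendre-symbol argument (which does not extend past prime fields) with a direct exponent-parity computation in the cyclic group $\fq^*$.

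First I would dispatch the characteristic-two case. When $q=2^k$, the Frobenius $x\mapsto x^2$ is a bijection on $\fq$, so every non-zero element is a square; in particular every positive element has a square root, and that root is itself non-zero and therefore positive. Hence $\fq$ is definite whenever $\mathrm{char}(\fq)=2$.

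For odd $q=p^k$, fix a generator $g$ of the cyclic group $\fq^*$. Since the squaring map on $\fq^*$ is two-to-one, the positive elements are exactly the even powers of $g$, and a positive element $a=g^{2i}$ has precisely two square roots, namely $g^i$ and $g^{i+(q-1)/2}$. A square root $g^j$ is positive iff $j$ is even, so the question reduces to comparing the parities of $i$ and $i+(q-1)/2$, which is governed by the parity of $(q-1)/2$. If $q\equiv 3\pmod 4$, then $(q-1)/2$ is odd, so the two exponents have opposite parities; exactly one square root is positive, and $\fq$ is definite. If $q\equiv 1\pmod 4$, then $(q-1)/2$ is even, so both exponents share the parity of $i$; taking $i=1$ exhibits a positive element $g^2$ whose only square roots are $g$ and $g^{1+(q-1)/2}$, both odd powers and therefore non-positive, so $\fq$ fails to be definite.

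To finish I would translate the condition $q\equiv 3\pmod 4$ into the stated arithmetic condition on $p$ and $k$. If $p\equiv 1\pmod 4$, then $p^k\equiv 1\pmod 4$ for every $k\geq 1$; if $p\equiv 3\pmod 4$, then $p^k\equiv 3^k\pmod 4$, which equals $3$ precisely when $k$ is odd. Combined with the characteristic-two case, this yields exactly the classification in the theorem.

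The step I expect to be most delicate is the odd-characteristic analysis: the temptation is to mimic the Legendre-symbol argument from the paragraph preceding the theorem, but as the authors already note, that argument does not survive the passage from $\mathbb{F}_p$ to a proper prime power $\mathbb{F}_q$. The key observation that unlocks the prime-power case is that the cyclic structure of $\fq^*$ lets us replace multiplicativity of the Legendre symbol with the simpler statement that ``positive'' means ``even exponent with respect to $g$'', after which everything is a parity calculation modulo $2$.
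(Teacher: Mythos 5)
Your proposal is correct and follows essentially the same route as the paper: both dispose of characteristic two via the Frobenius bijection and then exploit the cyclicity of $\fq^*$ to reduce definiteness to the parity of $(q-1)/2$, finishing with the same computation of $q \bmod 4$ in terms of $p$ and $k$. The only cosmetic difference is in the non-definite direction, where you exhibit the explicit witness $g^2$ with two odd-exponent square roots, while the paper uses an element of order $4$ to show that squaring fails to be injective (hence surjective) on the set of positive elements.
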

\begin{proof}
If $\fq$ has characteristic $2$ then the Frobenius map $f:\fq\rightarrow\fq$ given by $f(x)=x^2$ is an automorphism and therefore every element of $\fq$ has a square root\footnote{Indeed, this argument shows that every perfect field of characteristic $2$ is definite.}.  Suppose now that $q=p^k$ where $p$ is an odd prime and $k$ is a positive integer.  Let $\mathcal{P}=\mathcal{P}(\fq)$ be the set of positive elements in  $\mathbb{F}_q$ and $\mathbb{F}_q^*=\left(\mathbb{F}_q\setminus\{0\}, \times \right)$ be the multiplicative group of $\mathbb{F}_q$.  Since $\mathbb{F}_q^*$ is cyclic, there exists an element $a$, such that the $q-1$ elements of $\mathbb{F}_q^*$ are $a, a^2, \ldots , a^{q-2}, a^{q-1} = 1$.  Thus, $\mathcal{P}=\{(a)^2, (a^2)^2, \ldots , (a^{q-2})^2, (a^{q-1})^2\}=\{a^2, a^{2\cdot 2}, \ldots , a^{2\cdot(q-2)}, a^{2\cdot(q-1)}\}$.  However $a^{q-1+k}=a^{q-1}a^{k}=a^k$ so  $$\mathcal{P}=\left\{a^2, a^4, \ldots, a^{2\left(\frac{q-3}{2}\right)}, a^{2\left(\frac{q-1}{2}\right)} \right\}=\left\{a^{2t}\mid 1\leq t\leq \frac{q-1}{2}\right\}.$$  
Recall that $\fq^*$ is a definite field if for each $y\in \mathcal{P}$ there is a $r\in \mathcal{P}$ such that $r^2=y$.  That is, if and only if for all $1\leq t\leq \frac{q-1}{2}$  there is an integer $s$ such that $1\leq s\leq \frac{q-1}{2}$ and $r^2=(a^{2s})^2=a^{2t}=y$.  This occurs if and only if $2s\equiv t\pmod{\frac{q-1}{2}}$ has a solution for all $1\leq t\leq \frac{q-1}{2}$.  That is, if and only if $2s\equiv 1\pmod{\frac{q-1}{2}}$ has a solution.  Observe that $$q\equiv \begin{cases}
3 \pmod{4}, & \textrm{if $p\equiv 3\pmod{4}$ and $k$ is odd}\\
1 \pmod{4}, & \textrm{otherwise}\\
\end{cases}$$ 
It follows that if $p\equiv 3\pmod{4}$ and $k$ is odd then $\frac{q-1}{2}$ is an odd integer and therefore $2s\equiv 1\pmod{\frac{q-1}{2}}$ has a solution. Therefore if $p\equiv 3\pmod{4}$ and $k$ is odd then $\mathbb{F}_{p^k}$ is a definite field.  Suppose now that  $k$ is even or $p\equiv 1\pmod{4}$.   Since $q=p^k\equiv 1\pmod{4}$ then $\fq^*$ has order $q-1$ which is divisible by $4$ so some element of $\fq^*$ has order $4$.  That is, there are elements $b,c\in \fq\setminus\{1\}$ such that $c^2=b$ and $b^2=1$.  Hence $b\neq 1\in \mathcal{P}$.  Define $f:\mathcal{P}\rightarrow \mathcal{P}$ by $f(x)=x^2$.
Since $\fq$ is finite but $f(1)=f(b)$, it follows that $f$ is not an onto map and therefore for some $y\in \mathcal{P}$ there is no $\mu\in \mathcal{P}$ such that $\mu^2=y$ and therefore $y$ has no positive square roots.
\end{proof}

\begin{definition}
A symmetric matrix, $A$, over a finite definite field $\fq$ is said to have a \textbf{Cholesky decomposition} if $A=LL^T$ for some lower triangular matrix $L \in  M_n(\fq)$ where $L$ has positive elements along its diagonal.
\end{definition}
We saw in Proposition \ref{stack} 
that a very common definition for positive definiteness for Hermitian matrices does not translate to finite fields, however
the following definition does, so it will be used throughout the paper as the main definition of positive-definite matrices over a definite field.

\begin{definition}
If $A$ is a symmetric $n\times n$ matrix over a definite field, $A$ is \textbf{positive
definite} if it possesses a Cholesky decomposition.
\end{definition}
In the following theorem and throughout this paper, $M_n(\mathbb{F})$ refers to the set (or space) of $n\times n$ matrices with entries in $\mathbb{F}$. The following results are finite-field adaptations of standard results from elementary number theory \cite{hardy1979introduction}  and linear algebra \cite{johnson1985matrix}. For the curious reader, we have included brief proofs in the appendix.

\begin{theorem}\label{apx1}
If $A \in M_n(\fq)$ and $A = LL^T$ for some lower triangular matrix
$L \in M_n(\fq)$ whose diagonal elements are all nonzero, then the leading principal
minors of $A$ are positive.   
\end{theorem}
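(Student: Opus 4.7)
The plan is to show that each leading principal minor of $A = LL^T$ is a nonzero square in $\mathbb{F}_q$, which by Definition \ref{define_positive} is exactly what it means to be positive. The key structural observation I would use is that because $L$ is lower triangular, taking a leading principal submatrix commutes with the product $LL^T$.

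First, I would fix $k$ with $1 \leq k \leq n$ and let $L_k$ denote the $k \times k$ leading principal submatrix of $L$ and $A_k$ the $k \times k$ leading principal submatrix of $A$. The main computational step is to verify that $A_k = L_k L_k^T$. For $i,j \leq k$, the $(i,j)$ entry of $LL^T$ is $\sum_m L_{im} L_{jm}$, but since $L$ is lower triangular we have $L_{im} = 0$ whenever $m > i$ and $L_{jm} = 0$ whenever $m > j$, so the sum collapses to a sum over $m \leq \min(i,j) \leq k$. Thus only entries of $L_k$ contribute, giving $A_k = L_k L_k^T$.

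From this, I would compute
\[
\det(A_k) = \det(L_k L_k^T) = \det(L_k)^2 = \left(\prod_{i=1}^k L_{ii}\right)^2,
\]
using that $L_k$ is lower triangular to read off its determinant as the product of diagonal entries. Since every $L_{ii}$ is assumed nonzero, this product is a nonzero element $\mu \in \mathbb{F}_q$, and $\det(A_k) = \mu^2$, so $\det(A_k)$ is positive in the sense of Definition \ref{define_positive}.

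I do not expect a serious obstacle here: the whole argument is a structural one about lower-triangular matrices plus the definition of positivity as being a nonzero square. The only subtlety worth flagging is that positivity in this setting does not require a positive square root of the diagonal entries (that is the stronger notion of a definite field); for this theorem we only need each $\det(A_k)$ to itself be a square, and the identity $\det(A_k) = \det(L_k)^2$ delivers that directly without invoking Theorem \ref{finite_definite_fields}.
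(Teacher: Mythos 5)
Your proof is correct and follows essentially the same route as the paper's: identify $A_k = L_k L_k^T$, then conclude $\det(A_k) = \det(L_k)^2$ is a nonzero square. You actually supply the justification for $A_k = L_k L_k^T$ that the paper only asserts, which is a welcome addition.
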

\begin{lemma}\label{apx2}
If $A$ is a symmetric matrix over a definite field with an $LDU$ decomposition
where $L$ and $U$ have all ones along their diagonals and the diagonal entries of $D$ are
positive, then $A$ has a Cholesky decomposition.
\end{lemma}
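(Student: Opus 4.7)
The plan is to reduce to the classical proof: show that the hypothesis of symmetry forces $U=L^T$, so the decomposition becomes $A=LDL^T$, and then extract a ``square root'' of $D$ using the definiteness of the field.

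First I would invoke uniqueness of the $LDU$ decomposition. Assuming $A=LDU$ with $L$ unit lower triangular, $U$ unit upper triangular, and $D$ diagonal (with nonzero diagonal entries, since we are told they are positive), one gets from $A=A^T$ the equation $LDU = U^T D L^T$. The right side is also an $LDU$ factorization (with $U^T$ unit lower triangular, $L^T$ unit upper triangular, and the same diagonal $D$), so by the standard uniqueness argument (one can compare $U(L^T)^{-1} = D^{-1}L^{-1}U^T D$, an upper-triangular matrix equal to a lower-triangular matrix, hence diagonal, and since both $U$ and $L^T$ are unit triangular, the diagonal equals $I$), we conclude $U = L^T$. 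Therefore $A = LDL^T$.

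Next I would use the hypothesis that the field $\fq$ is a definite field. Each diagonal entry $d_{ii}$ of $D$ is positive, so by Definition \ref{define_positive} and the definite-field property, there is a positive $\mu_i\in \fq$ with $\mu_i^2 = d_{ii}$. Let $D^{1/2}$ be the diagonal matrix with entries $\mu_i$. Since $D^{1/2}$ is diagonal it commutes with its transpose and $D^{1/2}(D^{1/2})^T = D$. Setting $L' = L D^{1/2}$, the matrix $L'$ is lower triangular (product of unit lower triangular with diagonal), and its diagonal entries are exactly the $\mu_i$, which are positive. Then
\[
L'(L')^T \;=\; LD^{1/2}(D^{1/2})^T L^T \;=\; L D L^T \;=\; A,
\]
so $A = L'(L')^T$ is a Cholesky decomposition in the sense of the definition preceding the lemma.

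The only step with any subtlety is the uniqueness of the $LDU$ decomposition; once that is in hand, everything reduces to the purely algebraic observation that on a definite field we can take a positive square root of each positive diagonal entry, which is built into the definition of ``definite field'' established in Theorem \ref{finite_definite_fields}. I expect the uniqueness argument to be the main place where one might want to be careful, since one must verify that the relevant triangular inverses exist, but this follows from the diagonal entries of $L$, $U$, and $D$ all being nonzero.
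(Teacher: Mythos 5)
Your proposal is correct and follows essentially the same route as the paper's own proof: use symmetry plus uniqueness of the $LDU$ factorization to get $U=L^T$, then form $\sqrt{D}$ from the positive square roots guaranteed by the definite-field property and set $L'=L\sqrt{D}$. The only difference is that you spell out the uniqueness argument for $U=L^T$, which the paper merely asserts.
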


\begin{lemma}\label{pivots}\label{apx3}  
 Let $A_k$ be the $k \times k$ leading principal submatrix of an $n \times n$ matrix $A$. If $A$ has an $LDU$ factorization, $A = LDU$, where $L$ is a lower triangular matrix with all ones along its diagonal, $U$ is upper triangular with all ones along its diagonal, and $D$ is diagonal, then $\det(A_k) = d_{11}d_{22} \cdots  d_{kk}$. The 1st pivot is $d_{11} = \det(A_1) = a_{11}$ and the $k$th pivot for $k = 2, 3, \cdots, n$ is $d_{kk} = \det(A_k)/\det(A_{k-1})$, where $d_{kk}$ is the $(k, k)$-th entry of $D$ for all $k = 1, 2, \cdots , n$.
\end{lemma}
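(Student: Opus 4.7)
The plan is to exploit the block-triangular structure of $L$, $D$, and $U$ to show that the leading principal submatrices of $A$ inherit an $LDU$ factorization from $A$ itself, and then read off the determinantal identity from the fact that $L$ and $U$ have unit diagonal.

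First I would write, for each $k \in \{1,\ldots,n\}$, the block decompositions
$$L = \begin{pmatrix} L_k & 0 \\ L_{21} & L_{22} \end{pmatrix}, \qquad D = \begin{pmatrix} D_k & 0 \\ 0 & D_{22} \end{pmatrix}, \qquad U = \begin{pmatrix} U_k & U_{12} \\ 0 & U_{22} \end{pmatrix},$$
where $L_k$, $D_k$, and $U_k$ are the $k \times k$ leading principal submatrices (so $L_k$ is unit lower triangular, $U_k$ is unit upper triangular, and $D_k = \mathrm{diag}(d_{11},\ldots,d_{kk})$). Multiplying these blocks out, the top-left $k \times k$ block of $LDU$ is exactly $L_k D_k U_k$, so $A_k = L_k D_k U_k$.

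Next I would take determinants. Since $L_k$ and $U_k$ are triangular with $1$'s on the diagonal, $\det(L_k) = \det(U_k) = 1$; and $D_k$ is diagonal, so $\det(D_k) = d_{11}d_{22}\cdots d_{kk}$. Multiplicativity of the determinant then yields
$$\det(A_k) = \det(L_k)\det(D_k)\det(U_k) = d_{11}d_{22}\cdots d_{kk},$$
which is the first claim. Setting $k=1$ gives $d_{11} = \det(A_1) = a_{11}$. For $k \geq 2$, when $\det(A_{k-1}) \neq 0$ (equivalently, when the previous pivots $d_{11},\ldots,d_{k-1,k-1}$ are all nonzero), dividing the identity for $k$ by the identity for $k-1$ gives
$$\frac{\det(A_k)}{\det(A_{k-1})} = \frac{d_{11}\cdots d_{kk}}{d_{11}\cdots d_{k-1,k-1}} = d_{kk},$$
establishing the pivot recursion.

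There is essentially no serious obstacle here; the only subtlety is ensuring that the block multiplication $LDU$ really yields $L_k D_k U_k$ in the top-left corner, which is where the lower/upper triangularity of $L$ and $U$ is used (the zero blocks above $L_{21}$ and below $U_{12}$ kill all the cross terms). A brief remark should be made that the quotient formula for $d_{kk}$ presupposes $\det(A_{k-1}) \neq 0$; in the intended application the $d_{ii}$ are positive (hence nonzero) elements of the definite field, so this hypothesis is harmless.
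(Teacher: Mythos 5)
Your proposal is correct and follows essentially the same route as the paper: both partition $L$, $D$, $U$ into leading $k\times k$ blocks, observe that $A_k = L_k D_k U_k$, and conclude from the unit diagonals of $L_k$ and $U_k$; the paper phrases the final determinant computation as a short induction via $\det(A_k)=\det(A_{k-1})d_{kk}$, whereas you compute $\det(L_k)\det(D_k)\det(U_k)$ directly, which is an immaterial difference. Your explicit caveat that the quotient formula needs $\det(A_{k-1})\neq 0$ is a welcome touch that the paper handles by asserting all leading principal minors are nonzero.
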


\begin{lemma}\label{LDUChol}\label{apx4}  
If $A$ is a symmetric matrix over a definite field with an $LDU$ decomposition where $L$ and $U$ have all ones along their diagonals and the entries of $D$ are positive, then $A$ has a Cholesky decomposition. 
\end{lemma}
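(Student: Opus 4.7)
My plan is to reduce the $LDU$ factorization to an $LDL^T$ factorization using symmetry, and then absorb $\sqrt{D}$ into $L$ using the definiteness of $\mathbb{F}$.

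First I would use symmetry to identify $U$ with $L^T$. Since $A = LDU$ and $A = A^T = U^T D^T L^T = U^T D L^T$, and both $(L, D, U)$ and $(U^T, D, L^T)$ are $LDU$ decompositions of $A$ with unit triangular factors, the uniqueness of the $LDU$ factorization (when it exists with unit diagonals on $L$ and $U$) forces $U = L^T$. I would either cite this standard uniqueness fact or verify it quickly: if $L_1 D_1 U_1 = L_2 D_2 U_2$ with the $L_i$ unit lower triangular and the $U_i$ unit upper triangular, then $L_2^{-1} L_1 = D_2 U_2 U_1^{-1} D_1^{-1}$ is simultaneously unit lower triangular and upper triangular, hence the identity; the equality $D_1 = D_2$ then follows as well. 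So $A = LDL^T$.

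Next, because $\mathbb{F}$ is a definite field and each diagonal entry $d_{ii}$ of $D$ is positive, each $d_{ii}$ has a positive square root $\mu_i \in \mathbb{F}$. Let $S = \mathrm{diag}(\mu_1, \ldots, \mu_n)$, so $S$ is diagonal with positive diagonal, and $S^2 = D$, i.e., $D = S S^T$ since $S$ is diagonal. Then
\[
A = LDL^T = L S S^T L^T = (LS)(LS)^T.
\]
Setting $L' = LS$, the matrix $L'$ is lower triangular (product of a lower triangular matrix and a diagonal one), and its diagonal entries are $1 \cdot \mu_i = \mu_i$, which are positive. Hence $A = L'(L')^T$ is a Cholesky decomposition of $A$ in the sense of the definition given in the paper.

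The only genuine subtlety is the appeal to uniqueness of $LDU$ over a finite field; this is the step I would be most careful about, since the usual proof is by Gaussian elimination and goes through verbatim once we know the leading principal minors are nonzero (which is guaranteed by the existence of the $LDU$ with unit diagonals, via Lemma \ref{pivots}). Everything else is routine manipulation, and the essential role of the definite field hypothesis appears precisely in extracting the positive $\mu_i$ from the positive $d_{ii}$.
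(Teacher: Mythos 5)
Your proof is correct and follows essentially the same route as the paper's: use symmetry and uniqueness of the $LDU$ factorization to get $U = L^T$, then use definiteness to extract positive square roots of the diagonal entries of $D$ and absorb $\sqrt{D}$ into $L$. Your explicit verification of $LDU$ uniqueness and your direct use of the definite-field hypothesis to choose the positive root $\mu_i$ are, if anything, slightly cleaner than the paper's version.
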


\begin{theorem}[Corollary 3.5.5 of \cite{johnson1985matrix}]
If $A$ is invertible, then it admits an $LDU$ factorization if and only if all its leading principal minors are nonsingular.
\end{theorem}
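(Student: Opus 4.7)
The plan is to prove the two directions separately. The forward direction is essentially immediate from Lemma \ref{pivots}: if $A = LDU$ with $L$ and $U$ unitriangular, then $\det(A_k) = d_{11}\cdots d_{kk}$ for every $k$, and since $A$ is invertible we have $\det(A) = \prod_{i=1}^n d_{ii} \ne 0$, forcing each $d_{ii} \ne 0$ and hence every leading principal minor nonzero.

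For the converse I would argue by induction on $n$. The base case $n=1$ is handled by taking $L = U = [1]$ and $D = [a_{11}]$, with $a_{11} \ne 0$ coming from the hypothesis on the first principal minor. For the inductive step, write $A$ in block form
$$A = \begin{pmatrix} A_{n-1} & b \\ c^T & a_{nn} \end{pmatrix}.$$
The first $n-1$ leading principal minors of $A_{n-1}$ coincide with those of $A$, so they are all nonzero and, in particular, $A_{n-1}$ is invertible. By induction, $A_{n-1} = L'D'U'$ with $L'$, $U'$ unitriangular and $D'$ diagonal with all nonzero entries. I would then seek an extension of the shape
$$A = \begin{pmatrix} L' & 0 \\ x^T & 1 \end{pmatrix}\begin{pmatrix} D' & 0 \\ 0 & d \end{pmatrix}\begin{pmatrix} U' & y \\ 0 & 1 \end{pmatrix}$$
and solve for $x$, $y$, and $d$ by multiplying out and comparing blocks: the top-left block is automatically $A_{n-1}$, the off-diagonal blocks force $y = (L'D')^{-1}b$ and $x^T = c^T(D'U')^{-1}$ (well-defined because $L'$, $D'$, $U'$ are all invertible), and the bottom-right block forces $d = a_{nn} - x^T D' y$.

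The main obstacle I anticipate is verifying that the new pivot $d$ is nonzero, and this is precisely the step that uses the hypothesis that $A$ itself (not just its earlier principal submatrices) is invertible. I would dispatch it by taking determinants of the displayed block factorization: since $\tilde L$ and $\tilde U$ are unitriangular, $\det(A) = d \cdot \det(D') = d \cdot \det(A_{n-1})$, so $d = \det(A)/\det(A_{n-1}) \ne 0$. Nothing in the argument uses properties of $\mathbb{R}$ or $\mathbb{C}$ beyond field axioms, so the proof goes through verbatim over an arbitrary finite (definite) field.
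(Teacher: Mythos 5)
Your argument is correct, but note that the paper does not actually prove this statement: it is imported wholesale as Corollary 3.5.5 of Horn and Johnson, with no proof given in the appendix, so there is no in-paper argument to compare yours against. What you have supplied is the standard bordering/induction proof, and it is sound: the forward direction is exactly the pivot identity $\det(A_k)=d_{11}\cdots d_{kk}$ of Lemma \ref{pivots} combined with $\det(A)=\prod_i d_{ii}\neq 0$; for the converse, multiplying out your block ansatz gives top-right block $L'D'y=b$, bottom-left block $x^TD'U'=c^T$, and bottom-right block $x^TD'y+d=a_{nn}$, all solvable because $L'$, $D'$, $U'$ are invertible by the inductive hypothesis, and $d=\det(A)/\det(A_{n-1})\neq 0$ follows by taking determinants. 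Your closing observation is the substantive added value here: the argument uses only field axioms, so it certifies that the citation to a result stated over $\mathbb{R}$ or $\mathbb{C}$ is legitimate over $\mathbb{F}_q$ -- a point the paper asserts implicitly but never checks. One small caution: the theorem's phrase ``leading principal minors are nonsingular'' should be read as ``nonzero'' (minors are scalars), which is how you correctly interpreted it.
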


\begin{corollary}\label{apx5}
If all leading principal minors of a symmetric matrix $A$ over a definite field are positive, then $A$ has a Cholesky decomposition.
\end{corollary}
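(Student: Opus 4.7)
The plan is to chain together the cited $LDU$-existence theorem, the pivot formula of Lemma~\ref{pivots}, and Lemma~\ref{LDUChol}, with the definite-field hypothesis entering only at the very last step. Throughout, I write $A_k$ for the $k\times k$ leading principal submatrix of $A$ and adopt the convention $\det(A_0)=1$.

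First I would observe that positivity of every leading principal minor means, in particular, that each $\det(A_k)$ is a nonzero square, hence nonzero. Taking $k=n$ shows $A$ is invertible, and all leading principal minors are nonsingular, so the quoted Corollary 3.5.5 of \cite{johnson1985matrix} gives an $LDU$ factorization $A = LDU$ with $L$ unit lower triangular and $U$ unit upper triangular. Next I would use symmetry: $A=A^T$ gives $LDU = U^T D L^T$, and since $U^T$ is unit lower triangular while $L^T$ is unit upper triangular, the uniqueness half of the same theorem forces $U=L^T$ and $D=D$. Hence $A=LDL^T$.

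The heart of the argument is showing that the diagonal entries of $D$ are positive in the sense of Definition~\ref{define_positive}. By Lemma~\ref{pivots}, the $k$-th pivot satisfies
\[
d_{kk} \;=\; \frac{\det(A_k)}{\det(A_{k-1})}.
\]
By hypothesis, we may write $\det(A_k) = \alpha_k^2$ with $\alpha_k \in \fq \setminus\{0\}$ for each $k\ge 1$, and set $\alpha_0 = 1$. Then
\[
d_{kk} \;=\; \frac{\alpha_k^{\,2}}{\alpha_{k-1}^{\,2}} \;=\; \bigl(\alpha_k \alpha_{k-1}^{-1}\bigr)^{2},
\]
and since $\alpha_k\alpha_{k-1}^{-1}\ne 0$, each $d_{kk}$ is a nonzero square, i.e.\ positive. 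Thus $A=LDL^T$ is precisely the type of $LDU$ decomposition to which Lemma~\ref{LDUChol} applies, and invoking that lemma yields a Cholesky decomposition of $A$.

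The only place that could cause trouble is the pivot-positivity step, and it is not really an obstacle: the ratio of two nonzero squares is automatically a nonzero square in any field, so no appeal to the definite-field hypothesis is needed here. That hypothesis is needed only inside Lemma~\ref{LDUChol}, where one must extract an actual positive square root of each $d_{kk}$ in order to split $D$ and absorb the factor into $L$.
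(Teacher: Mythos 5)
Your proof follows the same route as the paper's: invertibility from nonzero minors, the cited $LDU$ existence theorem, symmetry plus uniqueness to get $U=L^T$, Lemma~\ref{pivots} for the pivots, and Lemma~\ref{LDUChol} to finish. The one place you go beyond the paper is in actually justifying why the pivots are positive (the ratio of two nonzero squares is a nonzero square), a step the paper asserts without argument; your observation that the definite-field hypothesis is only consumed inside Lemma~\ref{LDUChol} is also correct.
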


\begin{lemma}\label{Gram}\label{apx6}
All leading principal submatrices of a Gram matrix are also Gram matrices.
\end{lemma}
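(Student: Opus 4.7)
The plan is to observe that the statement follows directly from unpacking the definition of a Gram matrix. Suppose $A \in M_n(\fq)$ is a Gram matrix, so that there exist vectors $v_1, \ldots, v_n$ (in some $\fq^m$ equipped with the standard symmetric bilinear form) with $A_{ij} = v_i^T v_j$ for all $1 \le i, j \le n$. Fix any $k \in \{1, \ldots, n\}$ and let $A_k$ denote the leading $k \times k$ principal submatrix of $A$. By the definition of leading principal submatrix, $(A_k)_{ij} = A_{ij}$ for all $1 \le i, j \le k$, so $(A_k)_{ij} = v_i^T v_j$.

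The second and essentially only step is to recognize the right-hand side as the $(i,j)$ entry of the Gram matrix of the truncated tuple $v_1, \ldots, v_k$. Hence $A_k$ is itself a Gram matrix, and since $k$ was arbitrary, every leading principal submatrix of $A$ is a Gram matrix. There is no real obstacle here; the lemma is a purely definitional consequence. The only subtle point worth flagging is that one should make sure the ambient notion of Gram matrix is the finite-field analogue built from the bilinear form $(x, y) \mapsto x^T y$ rather than a Hermitian form involving conjugation, since the latter has no obvious meaning over a general $\fq$.
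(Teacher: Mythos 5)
Your proof is correct and follows essentially the same route as the paper's: both simply observe that the leading $k\times k$ principal submatrix of the Gram matrix of $v_1,\ldots,v_n$ is by definition the Gram matrix of $v_1,\ldots,v_k$. The only detail the paper adds that you omit is the remark that $v_1,\ldots,v_k$ remain linearly independent (relevant because the paper's notion of Gram matrix is tied to linearly independent vectors), but this is immediate since any subset of a linearly independent set is linearly independent.
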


\begin{theorem}\label{apx7}
A matrix, $M\in \mathcal{M}_n(\F_q)$, is a Gram matrix if and only if it is positive definite. 
\end{theorem}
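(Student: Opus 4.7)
The plan is to prove both implications, leveraging the groundwork in Lemmas \ref{apx2}--\ref{apx6}. The forward direction, positive-definite $\implies$ Gram matrix, is essentially a relabeling: given $M = LL^T$ with $L$ lower triangular and having positive diagonal, set $B = L^T$ and note that $M = B^T B$ presents $M$ as the Gram matrix of the columns of $B$, which are linearly independent because $L$ (and hence $B$) is invertible, its diagonal entries being nonzero.

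For the converse, I would reduce to Corollary \ref{apx5}, which asks only that all leading principal minors of $M$ be positive. Lemma \ref{Gram} already tells us that each leading principal submatrix of $M$ is itself a Gram matrix, so the entire question collapses to a single determinant statement: the determinant of every Gram matrix over $\mathbb{F}_q$ is a positive element of $\mathbb{F}_q$. When such a matrix can be written as $C^T C$ with $C$ square and invertible, $\det(C^T C) = (\det C)^2$ is a nonzero square in $\mathbb{F}_q$, hence positive by the definition of a definite field; Corollary \ref{apx5} then delivers the Cholesky decomposition of $M$.

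The main obstacle I expect is producing a square, invertible realizer $C$ for each leading principal submatrix. Over $\mathbb{R}$ one sidesteps this by invoking Cauchy--Binet, which writes $\det(B^T B)$ as a sum of squared maximal minors of $B$ and is thus automatically nonnegative; over $\mathbb{F}_q$ a sum of squares need not itself be a square, so that shortcut is unavailable. I would instead either tighten the working definition of ``Gram matrix'' so that the realizer is forced to be square (equivalently, that the vectors span an $n$-dimensional ambient space), or construct such a realizer inductively one coordinate at a time, invoking the definiteness of $\mathbb{F}_q$ from Theorem \ref{finite_definite_fields} to ensure that the successive pivots stay positive. This reduction --- from an a priori non-square realizer to a square one without breaking the Gram structure --- is where the definite-field hypothesis must do real work, and it is the technical heart of the converse.
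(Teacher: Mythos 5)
Your forward direction (positive definite $\Rightarrow$ Gram) is exactly the paper's: write $M = LL^T = (L^T)^T(L^T)$ and observe that the columns of $L^T$ are linearly independent because $L$ is invertible. For the converse you also follow the paper's route --- Lemma \ref{Gram} plus Corollary \ref{apx5}, reducing everything to the claim that every leading principal minor of a Gram matrix is a nonzero square --- but you have put your finger on precisely the step the paper silently elides: its proof writes $\det(M_k)=\det(A_k^TA_k)=\det(A_k)^2$, where $A_k$ is the $n\times k$ matrix of the first $k$ vectors, which is meaningless for $k<n$; and, as you note, the Cauchy--Binet substitute only yields a sum of squares, which over $\F_q$ need not be a square.

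Unfortunately this gap cannot be closed by either of your proposed repairs, because the implication is false as stated. Over the definite field $\F_3$, take $v_1=(1,1)^T$ and $v_2=(1,0)^T$: these are linearly independent, the realizer $A=[v_1\ v_2]$ is already square and invertible, yet the Gram matrix
$$M = A^TA = \begin{bmatrix} 2 & 1 \\ 1 & 1 \end{bmatrix}$$
has leading $1\times 1$ minor $2$, which is not a square in $\F_3$; any Cholesky factor would require $l_{11}^2=2$, so $M$ is not positive definite. Thus tightening the definition to force a square realizer does not help, and no inductive construction of a square realizer for $M_1=[2]$ can exist, since $2$ has no square root in $\F_3$. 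Your instinct that this reduction is ``where the definite-field hypothesis must do real work'' was exactly right; the trouble is that it cannot do enough, and both your proof and the paper's stall at the same (irreparable) point. The honest conclusion is that only the direction positive definite $\Rightarrow$ Gram survives without further hypotheses on which Gram matrices are allowed.
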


\section{Counterexamples}

There are some properties of positive-definiteness, however, which no longer hold over definite fields.  Since real matrices are covered by the Hermitian case, we turn our attention specifically to \textbf{finite} definite fields. In this section, we consider the a number of classical equivalences which do not hold and present counterexamples. We also take a look into some of the other properties of Hermitian positive definite matrices and provide counterexamples to show they cannot hold over finite fields.

\begin{theorem}\label{HermImplies}   
If $A$ is a positive definite Hermitian matrix, that is, over $\mathbb{R}$, or $\mathbb{C}$, the following hold:
\begin{enumerate}
    \item $A$ has positive eigenvalues.
    \item The associated sesquilinear form is an inner product.
    \item All principal submatrices of $A$ are positive definite.
    \item $A^{-1}$ is positive definite.
    \item If $B$ is a positive definite Hermitian matrix, then $A + B$ is positive definite.
    \item If $B$ is a positive definite Hermitian matrix, then $ABA$ and $BAB$ are positive definite.
    \item If $B$ is a positive definite Hermitian matrix, then the Hadamard product $A \circ B$ is positive definite and the Frobenius inner product, $A:B$ is positive.
\end{enumerate}
\end{theorem}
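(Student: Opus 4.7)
The plan is to reduce every item to the classical characterization $z^*Az>0$ for all non-zero $z\in\mathbb{C}^n$, combined with the spectral theorem for Hermitian matrices (which guarantees a decomposition $A=UDU^*$ with $U$ unitary and $D$ a real diagonal matrix of eigenvalues). Once these two tools are in hand, most of the seven items become short computations.

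For (1), I would apply the inequality $v^*Av>0$ to a unit eigenvector $v$ with eigenvalue $\lambda$, yielding $\lambda=v^*Av>0$. Item (2) is immediate from the definition of a sesquilinear form together with the Hermitian condition $A^*=A$, which supplies conjugate symmetry; linearity in one slot and conjugate linearity in the other are formal, and the positive-definiteness axiom is the hypothesis itself. For (3), any principal submatrix $A_S$ indexed by $S\subseteq\{1,\dots,n\}$ satisfies $z^*A_S z=\hat z^*A\hat z$, where $\hat z$ is the zero extension of $z$ to $\mathbb{C}^n$; since $\hat z\ne 0$, the right-hand side is positive. For (4), the spectral decomposition gives $A^{-1}=UD^{-1}U^*$ with $D^{-1}$ having positive diagonal entries, so $z^*A^{-1}z=\sum_i |w_i|^2/d_{ii}>0$ where $w=U^*z\ne 0$. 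For (5), simply expand $z^*(A+B)z=z^*Az+z^*Bz$ and add. For (6), use that $A$ is Hermitian to write $z^*ABAz=(Az)^*B(Az)$; since positive definiteness forces $A$ to be invertible, $z\ne 0$ gives $Az\ne 0$ and hence $(Az)^*B(Az)>0$; checking that $ABA$ is itself Hermitian via $(ABA)^*=A^*B^*A^*=ABA$ completes the argument, and $BAB$ is symmetric.

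The main obstacle is item (7), in particular the Schur product theorem for the Hadamard product. My plan here is to use the spectral expansions $A=\sum_i\lambda_i v_i v_i^*$ and $B=\sum_j\mu_j w_j w_j^*$ with $\lambda_i,\mu_j>0$, observe the identity $(v_iv_i^*)\circ(w_jw_j^*)=(v_i\circ w_j)(v_i\circ w_j)^*$, and conclude
\[
A\circ B=\sum_{i,j}\lambda_i\mu_j\,(v_i\circ w_j)(v_i\circ w_j)^*,
\]
which is a nonnegative combination of rank-one positive semidefinite matrices. To upgrade this to positive definiteness, I would verify that no non-zero vector $z$ can be orthogonal to every $v_i\circ w_j$; this uses that the $v_i$ form an orthonormal basis, the $w_j$ form an orthonormal basis, and the coefficients $\lambda_i\mu_j$ are all strictly positive. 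For the Frobenius inner product part of (7), writing $A:B=\operatorname{tr}(A^*B)=\operatorname{tr}(AB)$ and substituting $A=UDU^*$ reduces the trace to $\operatorname{tr}(D\,U^*BU)$, which equals $\sum_i d_{ii}(U^*BU)_{ii}$; each $d_{ii}>0$ and each diagonal entry of $U^*BU$ is $u_i^*Bu_i>0$, so the sum is positive (in fact strictly so when $A,B$ are nonzero).

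Apart from the Hadamard product, every step is either a one-line identity or a direct appeal to the spectral theorem, so the real work is concentrated in (7); the rest is bookkeeping, and I would present items (1)--(6) briefly before devoting the bulk of the proof to the Schur product argument.
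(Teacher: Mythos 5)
Your proposal is correct, but there is nothing in the paper to compare it against: Theorem \ref{HermImplies} is stated without proof, as a recollection of classical facts about Hermitian positive definite matrices (the analogous list in the introduction is attributed to \cite{johnson1985matrix}), and its only role in the paper is to set up the counterexamples that follow. So you have supplied a proof where the authors supply a citation. Your route --- reducing everything to the characterization $z^*Az>0$ plus the spectral theorem --- is the standard textbook one, and items (1)--(6) and the Frobenius trace computation are all complete and correct as written. The one place where you have a sketch rather than a proof is the strictness upgrade in the Schur product argument, and it is worth spelling out: if $z^*(A\circ B)z=0$, then $(v_i\circ w_j)^*z=0$ for all $i,j$; since $(v_i\circ w_j)^*z=v_i^*\left(\overline{w_j}\circ z\right)$ and the $v_i$ span $\mathbb{C}^n$, this forces $\overline{w_j}\circ z=0$ for every $j$, i.e.\ for each coordinate $k$ either $z_k=0$ or $(w_j)_k=0$ for all $j$; the latter is impossible because the $w_j$ form a basis, so $z=0$. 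With that paragraph added, your proof is complete.
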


\begin{theorem} 
The properties of Theorem \ref{HermImplies} do not hold in general over finite definite fields.
\end{theorem}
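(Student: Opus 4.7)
The plan is to work over the simplest finite definite field, $\mathbb{F}_3$, where the only positive element is $1$, so that any diagonal entry equal to $2$ (or any $k\times k$ leading minor equal to $2$) automatically witnesses failure of positive-definiteness by the contrapositive of Theorem \ref{apx1}. I would fix a single workhorse matrix
$$A = \begin{pmatrix} 1 & 1 \\ 1 & 2 \end{pmatrix} \in M_2(\mathbb{F}_3),$$
which is positive-definite because $A = LL^T$ with $L = \begin{pmatrix} 1 & 0 \\ 1 & 1 \end{pmatrix}$, and supplement it with the identity $I_n$ over $\mathbb{F}_3$ when a second matrix is needed. With these two examples in hand, I would check each of the seven properties in turn.

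For item (1), I would compute the characteristic polynomial of the workhorse, which is $\lambda^{2}+1$ in $\mathbb{F}_{3}[\lambda]$; since $-1$ is a nonsquare in $\mathbb{F}_{3}$, the matrix has no eigenvalues in the ground field, so the notion of ``positive eigenvalues'' fails vacuously. For item (2), I would simply invoke Proposition \ref{stack}: for any $n\ge 3$ and any symmetric $A$ (for instance $I_{3}$ over $\mathbb{F}_{3}$, which is positive-definite), there is a nonzero $v$ with $v^{T}Av=0$, so the associated form cannot be an inner product. Item (3) is immediate from the workhorse: its $(2,2)$ entry is $2$, a $1\times 1$ non-leading principal submatrix that is not positive. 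For item (4), I would invert the workhorse explicitly, obtaining $A^{-1}=\begin{pmatrix} 2 & 2\\ 2 & 1\end{pmatrix}$, whose $(1,1)$ minor is $2$, disqualifying it by Theorem \ref{apx1}.

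For item (5), taking $B=A$ (or $B=I$) over $\mathbb{F}_{3}$ gives $A+B$ with a non-positive diagonal, hence a non-positive leading principal minor. For item (6), I would compute $A^{2}$ for the workhorse (treating $B=I$); its $(1,1)$ entry is $2$, so $ABA=A^{2}$ fails again by Theorem \ref{apx1}, and the same example handles $BAB$. For item (7), the Hadamard square of the workhorse is $\begin{pmatrix} 1 & 1 \\ 1 & 1\end{pmatrix}$ (since $2^{2}=4=1$ in $\mathbb{F}_{3}$), whose determinant is $0$, and the Frobenius product $I_{3}:I_{3}=3=0$ is non-positive.

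There is no real obstacle here: the main task is pedagogical bookkeeping, making sure in each case that the chosen inputs are genuinely positive-definite under the paper's Cholesky definition and that the computed output either fails to have a leading principal minor that is a square or, in the eigenvalue case, has no eigenvalues in $\mathbb{F}_{q}$ at all. The only mildly delicate item is (2), where the real content is not a direct computation but an appeal to Proposition \ref{stack}; all the others reduce to verifying a single entry or $2\times 2$ determinant in $\mathbb{F}_{3}$.
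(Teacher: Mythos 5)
Your proposal is correct and follows essentially the same strategy as the paper: exhibit explicit matrices certified positive definite by a Cholesky factorization, apply each operation, and detect failure via the contrapositive of Theorem \ref{apx1} (a non-positive leading principal minor, or a non-positive $1\times 1$ principal submatrix for item (3)). The paper scatters its counterexamples across $\mathbb{F}_7$, $\mathbb{F}_3$, and $\mathbb{F}_2$, whereas your single workhorse $\left(\begin{smallmatrix}1&1\\1&2\end{smallmatrix}\right)$ over $\mathbb{F}_3$ together with the identity is more economical; I have checked all of your computations and they are right. Two caveats deserve attention in a final write-up. First, for item (1) your matrix has characteristic polynomial $\lambda^2+1$, which is irreducible over $\mathbb{F}_3$, so the matrix has \emph{no} eigenvalues in the ground field; this refutes ``$A$ has positive eigenvalues'' only under the convention that the statement requires the spectrum to lie in $\mathbb{F}_q$. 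The paper's counterexamples are stronger: they produce eigenvalues that genuinely exist in the field and are non-positive (eigenvalues $6,5$ over $\mathbb{F}_7$; eigenvalues $1,2,2$ for a $3\times 3$ matrix over $\mathbb{F}_3$). In fact the only $2\times 2$ positive definite matrices over $\mathbb{F}_3$ are $I$ and two matrices with characteristic polynomial $\lambda^2+1$, so to get an in-field non-positive eigenvalue you must go to size $3$ or change fields. Second, for item (6), with your labels $BAB=IAI=A$ is still positive definite; you need to swap the roles (take $A=I$ and $B$ the workhorse) so that $BAB=B^2$, whose $(1,1)$ entry is $2$, supplies the counterexample for that form as well. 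Neither point is a genuine gap, but both should be stated explicitly.
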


\begin{proof}
We provide at least one counter example from a definite field for each property or explain why the described property does not hold.
\begin{enumerate}
\item The following matrix, in $\mathcal{M}_2(\F_7)$, is positive definite as all leading principal minors are positive in $\F_7$ but has eigenvalues of 6 and 5, which are not positive in the field. 
$$\begin{bmatrix}
    2 & 4 \\
    4  & 2 
\end{bmatrix}$$
For another example, consider the following in $M_3(\F_3)$, which has eigenvalues 1,2,2. $$\begin{bmatrix}
    1 & 0 & 2 \\
    0 & 1 & 1 \\
    2 & 1 & 0
\end{bmatrix}$$
One might hope that the converse still holds, that positive eigenvalues always indicate that a matrix is positive definite, but this sadly is also untrue. The following matrix over $\F_7$ has eigenvalues of $1$ and $2$, which are positive in $\F_7$, but not all leading principal minors are positive for the matrix, thus it is not positive definite. 
$$\begin{bmatrix}
6 & 6 \\
6 & 4
\end{bmatrix}
$$
\item The sesquilinear form defined by a matrix $A$ is a function from $\F_{q^2}^n \rightarrow \F_{q^2}^n$ given by $\langle x,y\rangle = y^TAx$ for $x, y \in \F_q$. For this to be an inner product, we must have that $\langle x,x\rangle$ is nonzero and positive for all nonzero $x$. However, in finite fields this form is isotropic, as seen in Proposition \ref{stack}, and therefore can be zero for nonzero $x$. 
\item The following matrix, in $\mathcal{M}_3(\F_3)$, is positive definite: $$\begin{bmatrix}
    1 & 2 & 0\\
    2 & 2 & 0 \\
    0 & 0 & 1
\end{bmatrix}$$  however it contains principal submatrix $\begin{bmatrix}
2 & 0\\
0 & 1
\end{bmatrix}$ which is not positive definite (evaluate the determinant).\\
For another example, consider the following matrix in $\mathcal{M}_3(\F_2)$, which is positive definite:
$$\begin{bmatrix}
    1 & 1 & 0\\
    1 & 0 & 0 \\
    0 & 0 & 1
\end{bmatrix}$$  One principal submatrix is $\begin{bmatrix}
0 & 0\\
0 & 1
\end{bmatrix}$, which is not positive definite (evaluate the determinant).  In general, there are many positive definite matrices with elements along the diagonal which are not positive. Taking a principal submatrix that causes one of these elements to be in the upper left corner will produce a submatrix that is not positive definite.

\item The following matrix, $\mathcal{M}_3(\F_3)$, is positive definite:
$$\begin{bmatrix}
    1 & 2 & 0\\
    2 & 2 & 0\\
    0 & 0 & 1
\end{bmatrix}$$
However, we have that $A^{-1}$ is $$\begin{bmatrix}
    2 & 1 & 0\\
    1 & 1 & 0\\
    0 & 0 & 1
\end{bmatrix}$$
which is not positive definite, since in particular $A_1 = [2]$ does not have positive determinant.\\
For another example, consider the following matrix, in $\mathcal{M}_3(\F_2)$, which is positive definite:
$$\begin{bmatrix}
    1 & 1 & 1\\
    1 & 0 & 0\\
    1 & 0 & 1
\end{bmatrix}$$
However, 
$$A^{-1}=\begin{bmatrix}
    0 & 1 & 0\\
    1 & 0 & 1\\
    0 & 1 & 1
\end{bmatrix}$$ which is not positive definite, as $A_1 = [0]$ does not have a positive determinant.

\item Consider the following in $\mathcal{M}_3(\F_2)$:
$$\begin{bmatrix}
    1 & 0 & 0\\
    0 & 1 & 0\\
    0 & 0 & 1
\end{bmatrix} + \begin{bmatrix}
    1 & 0 & 0\\
    0 & 1 & 0\\
    0 & 0 & 1
\end{bmatrix} = \begin{bmatrix}
    0 & 0 & 0\\
    0 & 0 & 0\\
    0 & 0 & 0
\end{bmatrix}$$
The identity matrix is positive definite, yet the zeros matrix is obviously not.
\item Consider the following positive definite matrices in $\mathcal{M}_2(\F_7)$:
$$A = \begin{bmatrix}
    2 & 1\\
    1 & 5
\end{bmatrix}, B = \begin{bmatrix}
    4 & 3\\
    3 & 6
\end{bmatrix}$$
We have that $ABA$ is $$\begin{bmatrix}
    6 & 1 \\
    1 & 2
\end{bmatrix}$$
This matrix is not positive definite, since $(ABA)_1 = [6]$ does not have positive determinant.\\
For another example, consider the following matrices, in $\mathcal{M}_3(\F_2)$, which are positive definite:
$$ A = 
\begin{bmatrix}
    1 & 0 & 1\\
    0 & 1 & 0\\
    1 & 0 & 0
\end{bmatrix}
, B= \begin{bmatrix}
    1 & 0 & 1\\
    0 & 1 & 1\\
    1 & 1 & 1
\end{bmatrix}$$ However
$$ABA=\begin{bmatrix}
    0 & 1 & 0\\
    1 & 1 & 0\\
    0 & 0 & 1
\end{bmatrix}$$
which is not positive definite since its leading $1\times 1$ principal minor is $0$. 

\item Consider $\begin{bmatrix}
    1 & 4 \\
    4 & 3
\end{bmatrix}$ and $\begin{bmatrix}
    2 & 2 \\
    2 & 3
\end{bmatrix}$ in $\mathcal{M}_2(\F_7)$, which are both positive definite. However, their Hadamard product is $\begin{bmatrix}
    2 & 1 \\
    1 & 2
\end{bmatrix}$
whose determinant is 3, which is not positive in $\F_7$ and therefore the matrix is not positive definite. Considering this same pair of matrices, their Frobenius inner product is 6, which is also not positive.\\
For another example, consider the following matrices, in $\mathcal{M}_3(\F_2)$, which are positive definite:
$$A= \begin{bmatrix}
    1 & 1 & 0\\
    1 & 0 & 1\\
    0 & 1 & 0
\end{bmatrix}, B= \begin{bmatrix}
    1 & 0 & 1\\
    0 & 1 & 1\\
    1 & 1 & 1
\end{bmatrix}. $$ However, their Hadamard product is
$$H= \begin{bmatrix}
    1 & 0 & 0\\
    0 & 0 & 1\\
    0 & 1 & 0
\end{bmatrix}$$ which is not positive definite, as $H_2= \begin{bmatrix}
    1 & 0 \\
    0 & 0 
\end{bmatrix}$ does not have a positive determinant.
For one more example, consider the following matrices, in $\mathcal{M}_3(\F_2)$, which are positive definite:
$$A= \begin{bmatrix}
    1 & 0 & 0\\
    0 & 1 & 1\\
    0 & 1 & 0
\end{bmatrix}, B= \begin{bmatrix}
    1 & 0 & 1\\
    0 & 1 & 0\\
    1 & 0 & 0
\end{bmatrix}.$$ Their Frobenius inner product is $0$, which is not positive. Observe that we need only find any pair of positive definite matrices where there is an even number of entries satisfying $a_{ij} = b_{ij} = 1$ and their Frobenius inner product to give a non-positive result.  
\end{enumerate}

\end{proof}

\section{Other Properties}

Some of the properties that Hermitian positive definite matrices possess do, however, analogize over definite fields.  

\begin{theorem}
If $A$ is a positive definite matrix over a definite field $\F$, and $r$ is positive in $\F$, then $rA$ is also positive definite.
\end{theorem}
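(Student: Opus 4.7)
The plan is to construct an explicit Cholesky decomposition of $rA$ from one for $A$. By the definition of positive definiteness in this paper, we are given $A = LL^T$ where $L$ is lower triangular and each diagonal entry $\ell_{ii}$ is positive in $\F$. We need to produce a lower triangular matrix $M$ with positive diagonal entries such that $rA = MM^T$.

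The first step is to invoke the hypothesis that $\F$ is definite together with the assumption that $r$ is positive: this yields a positive square root $s$ of $r$, so $r = s^2$ and $s$ itself is a square of some nonzero element of $\F$, say $s = t^2$. Set $M = sL$. Since scaling a lower triangular matrix by a scalar preserves lower triangularity, $M$ is lower triangular, and
\[
MM^T = (sL)(sL)^T = s^2 LL^T = rA,
\]
as required.

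The final step is to check that the diagonal entries of $M$ are positive, which is the one place the full strength of the definite field hypothesis is used. For each $i$, write $\ell_{ii} = \mu_i^2$ with $\mu_i \neq 0$; then the $(i,i)$-entry of $M$ is $s \ell_{ii} = t^2 \mu_i^2 = (t\mu_i)^2$, and since $t \neq 0$ and $\mu_i \neq 0$, the product $t\mu_i$ is nonzero, so $s\ell_{ii}$ is positive by Definition \ref{define_positive}.

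The main (minor) obstacle is that without the definite field hypothesis, one could only guarantee that $r$ has some square root $s$ but not necessarily a \emph{positive} one, and then the diagonal entries of $sL$ would fail to be positive in general. Once that is observed, the verification is immediate and requires no further computation.
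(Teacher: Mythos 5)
Your proof is correct, and it takes a genuinely different route from the paper's. You construct the Cholesky factor of $rA$ explicitly: writing $r=s^2$ with $s$ a \emph{positive} square root (which is exactly where the definite-field hypothesis enters, as you correctly flag), you set $M=sL$ and verify directly that $MM^T=rA$ and that each diagonal entry $s\ell_{ii}=(t\mu_i)^2$ is a nonzero square. The paper instead computes determinants: it shows $\det(rA_k)=r^k\det(L_k)^2=(s^k\det(L_k))^2$ is a nonzero square for every leading principal submatrix, and then concludes via the criterion that a symmetric matrix with all leading principal minors positive admits a Cholesky decomposition (Corollary \ref{apx5}). The trade-off is that the paper's argument needs only \emph{some} square root $s$ of $r$ (not a positive one), but it leans on the machinery of Lemma \ref{pivots} and Corollary \ref{apx5}, which themselves consume the definite-field hypothesis; your argument is self-contained, produces the decomposition $rA=(sL)(sL)^T$ explicitly, and isolates precisely the one point where definiteness of $\F$ is indispensable --- without it, $sL$ need not have positive diagonal. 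Both proofs are valid; yours is arguably the cleaner of the two.
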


\begin{proof}
If $A$ is a $n \times n$ positive definite matrix over a definite field $\F$, then it possesses a Cholesky decomposition, $A = LL^T$. If $\det(L) = \mu$ and $r$ is a square in $\F$, that is $r = s^2$ for $s \in \F$, then $\det(rA) = \det(rLL^T) = \det(rL)\det(L^T) = r^n\mu\mu = s^{2n}\mu^2 = (s^n\mu)^2$. As all leading principal submatrices have a similar decomposition, all leading principal minors of $rA$ are positive by a similar argument and thus $rA$ is positive definite.
\end{proof}

In the last section, we saw that inverses of positive definite matrices over finite definite fields are not positive definite. It is true, however, that the inverse matrix conjugated by the ``exchange'' or ``anti-diagonal identity'' matrix $\nabla$ is positive definite.

\begin{definition}
For an invertible matrix $A$, define its \textbf{anti-inverse} as $\nabla A^{-1} \nabla$ where $\nabla$ is the exchange matrix, with ones along its antidiagonal and zeroes elsewhere. That is, in the $n \times n$ case of $\nabla, a_{ij} = 1$ if $i + j = n+1$ and $a_{ij}= 0$ otherwise. For example, the $3 \times 3$ case of $\nabla$ is $$\begin{bmatrix}
 0 & 0 & 1 \\
 0 & 1 & 0 \\
 1 & 0 & 0 
\end{bmatrix}$$
\end{definition}
The following lemma will be helpful in showing that the anti-inverse is positive definite. 

\begin{lemma}
Every principal submatrix of a lower triangular matrix is lower triangular. 
\end{lemma}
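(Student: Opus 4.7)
The plan is to unpack the definitions directly, since the result is essentially a bookkeeping exercise about how indices behave under passing to a principal submatrix. Let $L \in M_n(\fq)$ be lower triangular, so $L_{ij} = 0$ whenever $i < j$. A principal submatrix of $L$ is determined by a choice of index set $S = \{i_1 < i_2 < \cdots < i_k\} \subseteq \{1,2,\ldots,n\}$ and consists of the entries in the rows and columns indexed by $S$. Write $M$ for this principal submatrix, so that $M_{ab} = L_{i_a, i_b}$ for $1 \le a,b \le k$.

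To prove $M$ is lower triangular, I need to show $M_{ab} = 0$ whenever $a < b$. The key observation is that the elements of $S$ are listed in strictly increasing order, so $a < b$ forces $i_a < i_b$. Then lower triangularity of $L$ gives $L_{i_a, i_b} = 0$, hence $M_{ab} = 0$. This establishes the lemma.

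There is no real obstacle: the argument is purely a matter of checking that the ordering of the chosen indices is preserved when they are relabeled $1, 2, \ldots, k$ in the submatrix. The reason the statement requires the submatrix to be \emph{principal} (rather than a general submatrix) is precisely that we use the same index set for rows and columns, so an off-diagonal position $(a,b)$ with $a<b$ in $M$ corresponds to an off-diagonal position $(i_a, i_b)$ with $i_a < i_b$ in $L$, which lies strictly above the diagonal of $L$ and is therefore zero.
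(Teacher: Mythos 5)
Your proof is correct and complete: parametrizing the principal submatrix by an increasing index set $S=\{i_1<\cdots<i_k\}$ and observing that $a<b$ forces $i_a<i_b$, hence $M_{ab}=L_{i_a,i_b}=0$, settles the claim in one line, and your remark about why \emph{principal} matters (same index set for rows and columns) is exactly the right point to flag. The paper takes a different and less economical route: it only explicitly treats the deletion of a \emph{single} row and column $i$, splitting into the cases of deleting the first, the last, or an interior index, and argues pictorially that the displayed block pattern of zeros is preserved after the remaining rows and columns shift. That argument covers a general principal submatrix only implicitly, by iterating single deletions, and its correctness rests on reading a schematic matrix display rather than on tracking indices. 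Your version is both more general (it handles an arbitrary index set in one step) and more rigorous; the paper's version buys some visual intuition about how the entries relocate after a deletion, which is mildly useful in the subsequent theorem where specific diagonal entries of the submatrix are identified, but nothing in your argument is missing.
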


\begin{proof}
Let $L$ be a lower triangular matrix. Deleting the first column and row clearly produces a lower triangular matrix, and similarly if we delete the last row and column. Now, suppose we delete the $i$th row and column. We have 
$$\begin{bmatrix}
    l_{11}  & \cdots & 0 & \cdots & 0\\
    \vdots &  \ddots & \vdots &  & \vdots \\
    l_{i1} &  \cdots & l_{ii} & \cdots & 0\\
    \vdots &    & \vdots & \ddots & \vdots \\
    l_{n1}  & \cdots & l_{ni} & \cdots & l_{nn}\\
\end{bmatrix} \rightarrow 
\begin{bmatrix}
    l_{11}  & \cdots & 0 & 0 &  \cdots & 0\\
    \vdots  & \ddots & \vdots & \vdots &  & \vdots \\
    l_{(i-1)1}  & \cdots & l_{(i-1)(i-1)} & 0 & \cdots & 0\\
    l_{(i+1)1} &  \cdots & l_{(i+1)(i-1)} & l_{(i+1)(i+1)} & \cdots & 0\\
    \vdots &    & \vdots & \vdots &  \ddots &\vdots  \\
    l_{n1} &  \cdots & l_{n(i-1)} & l_{n(i+1)} & \cdots & l_{nn}\\
\end{bmatrix}$$

The $(i-1)$-th leading principal submatrix is still lower triangular. As both row and column $i$ are removed, the original $l_{(i+1)(i-1)}$ entry now becomes the entry $l'_{ii}$ in the $i$th row and $i$th column of the new matrix. The rest of the matrix is shifted and retains the form of a lower triangular matrix.
\end{proof}

\begin{theorem}
If $A$ is a positive definite matrix in a definite field $\F$, then its anti-inverse is also positive definite. 
\end{theorem}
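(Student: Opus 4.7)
My plan is to build a Cholesky decomposition of $\nabla A^{-1} \nabla$ directly from the one guaranteed for $A$. Since $A$ is positive definite over a definite field, by definition we have $A = LL^T$ for some lower triangular $L$ whose diagonal entries are all positive. Then $A^{-1} = L^{-T} L^{-1}$, and inserting $\nabla^2 = I$ gives
\[
\nabla A^{-1} \nabla \;=\; \nabla L^{-T} \nabla \cdot \nabla L^{-1} \nabla \;=\; M M^T,
\]
where $M := \nabla L^{-T} \nabla$ and I have used $\nabla^T = \nabla$ together with $(\nabla L^{-T} \nabla)^T = \nabla L^{-1} \nabla$. So the task reduces to checking that $M$ is lower triangular with positive diagonal.

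To handle triangular structure, I would observe that $L^{-1}$ is lower triangular (the inverse of a lower triangular matrix is lower triangular), hence $L^{-T}$ is upper triangular. Conjugating an upper triangular matrix by $\nabla$ reverses the index order of both rows and columns, which sends upper triangular to lower triangular; moreover, the diagonal entries get permuted by $i \mapsto n+1-i$. Concretely, the $(i,i)$-entry of $M$ equals the $(n{+}1{-}i,\,n{+}1{-}i)$-entry of $L^{-T}$, which is $l_{n+1-i,\,n+1-i}^{-1}$. Thus $M$ is lower triangular, and its diagonal entries are precisely the multiplicative inverses of the diagonal entries of $L$, in reverse order.

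The remaining point, which is the only place any arithmetic in the definite field enters, is that the inverse of a positive element is positive. This follows immediately from the definition: if $x = \mu^2$ with $\mu \neq 0$, then $x^{-1} = (\mu^{-1})^2$ and $\mu^{-1} \neq 0$. Combined with the preceding paragraph, this gives that each diagonal entry of $M$ is positive, so $M M^T$ is a bona fide Cholesky decomposition of $\nabla A^{-1} \nabla$, proving the theorem.

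I do not expect a significant obstacle in this argument; the main things to be careful about are the two algebraic identities $\nabla^T = \nabla$ and $\nabla^2 = I$ (used to move transposes across the exchange matrix) and the tracking of how conjugation by $\nabla$ flips upper to lower triangular while permuting the diagonal. The preceding lemma that principal submatrices of lower triangular matrices remain lower triangular is not needed for this direct construction, though it would be required for an alternative approach based on checking that every leading principal minor of $\nabla A^{-1} \nabla$ is positive via Jacobi's complementary minor identity.
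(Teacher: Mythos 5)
Your proof is correct and follows essentially the same route as the paper's: write $\nabla A^{-1}\nabla = (\nabla L^{-T}\nabla)(\nabla L^{-1}\nabla)$, observe that conjugation by $\nabla$ converts the upper triangular $L^{-T}$ into a lower triangular matrix whose diagonal is that of $L^{-1}$ reversed, and check positivity of that diagonal. The only difference is in that last step: you read off the diagonal of $L^{-1}$ directly as the entrywise inverses $l_{ii}^{-1}$ and note $(\mu^2)^{-1}=(\mu^{-1})^2$, whereas the paper computes these entries via a cofactor expansion that invokes its lemma on principal submatrices of lower triangular matrices --- your version is slightly cleaner and renders that lemma unnecessary, as you point out.
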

\begin{proof}
Let $A$ be a positive definite matrix in a definite field $\F$. It is invertible and thus we can consider its anti-inverse. As $A$ is positive definite, we have $A = LL^T$ as a Cholesky decomposition. Note that $\nabla \nabla = I$ and therefore we have, writing $L^{-T} = (L^{-1})^T$,
\begin{align*}
    A^{-1} &= L^{-T} L^{-1}\\
    \nabla A^{-1} \nabla &= \nabla L^{-T} L^{-1} \nabla\\
    &= \nabla L^{-T} (\nabla \nabla) L^{-1} \nabla\\
    &= (\nabla L^{-T} \nabla) (\nabla L^{-1} \nabla)
\end{align*}

Note that right multiplying by $\nabla$ reverses the columns of the matrix and left multiplication by $\nabla$ reverses the rows. Thus, $\nabla L^{-T} \nabla$ takes an upper triangular matrix, $L^{-T}$, to a lower triangular matrix and $\nabla L^{-1} \nabla$ takes a lower triangular matrix to an upper triangular matrix. In fact, we have $$(\nabla L^{-T} \nabla)^T = (\nabla L^{-1} \nabla)$$

Thus, $\nabla A^{-1} \nabla$ takes the correct form to have a Cholesky decomposition. We need only check the diagonal elements of $\nabla L^{-T} \nabla$ are positive. As both the rows and columns are reversed by conjugating by $\nabla$, the diagonal elements of $L^{-T}$ are still the diagonal elements of $\nabla L^{-T} \nabla$, simply in a different order. As $LL^T$ is a Cholesky decomposition of $A$, the diagonal elements of $L$ are positive, and we need only check that the diagonal elements of $L^{-1}$ are positive.

When taking the inverse of $L$, the $(i,i)$ entry will be $1/\det(L)$ multiplied by the principal minor of the submatrix created by deleting the $i$th row and $i$th column. As this submatrix will be lower triangular and have diagonal elements which are a subset of those from $L$, the principal minor will be positive. Thus, the $i$th diagonal element of $L^{-1}$ is positive. As the diagonal elements of $L^{-1}$ are positive, so are those of $L^{-T}$. Thus, $(\nabla L^{-T} \nabla) (\nabla L^{-1} \nabla)$ is a Cholesky decomposition for $\nabla A^{-1}\nabla$. 
\end{proof}

In the last section, we provided counterexamples that proved the Hadamard product and the Frobenius product need not be positive definite nor positive respectively. It is true, however, that the Kronecker product of two positive definite matrices, even for definite fields, is positive definite.

\begin{theorem}
If $A$ and $B$ are positive definite matrices in a definite field $\F$, then so is their Kronecker product. In fact, if $A = LL^T$ and $B = MM^T$ then $A \otimes B = (L \otimes M)(L \otimes M)^T$
\end{theorem}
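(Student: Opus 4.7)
The plan is to verify the displayed identity $A \otimes B = (L \otimes M)(L \otimes M)^T$ directly from the algebraic properties of the Kronecker product, and then check that $L \otimes M$ satisfies the two shape conditions required of a Cholesky factor: being lower triangular, and having positive diagonal entries.

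First I would invoke the standard mixed-product and transpose identities for Kronecker products, namely $(X \otimes Y)(Z \otimes W) = (XZ) \otimes (YW)$ and $(X \otimes Y)^T = X^T \otimes Y^T$. These hold entrywise over any commutative ring, so in particular over $\F$. Applying them gives
\begin{equation*}
(L \otimes M)(L \otimes M)^T = (L \otimes M)(L^T \otimes M^T) = (LL^T) \otimes (MM^T) = A \otimes B,
\end{equation*}
which proves the claimed factorization. The cost here is just to cite (or briefly check) those two identities.

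Next I would check that $L \otimes M$ is lower triangular. Using the block description of $L \otimes M$, the $(i,j)$ block is the scalar multiple $l_{ij} M$. For $j > i$ we have $l_{ij} = 0$, so the block is zero; for $i = j$, the diagonal block is $l_{ii} M$, which is itself lower triangular since $M$ is. Combining these observations, every entry strictly above the main diagonal of $L \otimes M$ vanishes, so $L \otimes M$ is lower triangular.

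Finally I would verify positivity of the diagonal. Each diagonal entry of $L \otimes M$ has the form $l_{ii} m_{jj}$ for some $i, j$, and by hypothesis each $l_{ii}$ and $m_{jj}$ is positive in $\F$, i.e.\ equal to a nonzero square. If $l_{ii} = \alpha^2$ and $m_{jj} = \beta^2$ with $\alpha, \beta \neq 0$, then $l_{ii} m_{jj} = (\alpha \beta)^2$ with $\alpha \beta \neq 0$, so $l_{ii} m_{jj}$ is positive as well. Hence $L \otimes M$ is a valid Cholesky factor and $A \otimes B$ is positive definite. The only nontrivial step is the closure of positivity under multiplication, and even this is immediate from the squares-only definition; there is no real obstacle in this argument, which is why the statement itself packages the identity for $L \otimes M$ into the theorem.
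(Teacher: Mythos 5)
Your proof is correct and follows essentially the same strategy as the paper's: establish the factorization $(L\otimes M)(L\otimes M)^T = A\otimes B$, then check that $L\otimes M$ is lower triangular with positive diagonal, using the fact that a product of nonzero squares is a nonzero square. The only difference is cosmetic --- you cite the mixed-product and transpose identities for Kronecker products to get the factorization in one line, whereas the paper verifies it by an explicit blockwise computation of the $(i,j)$ entry; your route is a bit cleaner but relies on the same underlying facts.
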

\begin{proof}
Let $A$ and $B$ be $n \times n$ positive definite matrices in a definite field $\F$, with $A = LL^T$ and $B = MM^T$ their Cholesky decompositions.
$$
L \otimes M = \begin{bmatrix}
    l_{11}M & 0 & \cdots & 0\\
    l_{21}M & l_{22}M & \cdots & 0\\
    \vdots & \vdots & \ddots & \vdots \\
    l_{k1}M & l_{k2}M & \cdots & l_{kk}M\\
\end{bmatrix},$$  $$(L \otimes M)^T = \begin{bmatrix}
    l_{11}M^T & l_{21}M^T & \cdots & l_{k1}M^T\\
    0 & l_{22}M^T & \cdots & l_{k2}M^T\\
    \vdots & \vdots & \ddots & \vdots \\
    0 & 0 & \cdots & l_{kk}M^T\\
\end{bmatrix}$$

Consider $(L \otimes M)(L \otimes M)^T$. When calculating any entry of this product, we will have a sum of scalars each multiplied by $MM^T$, and so we may factor this out by the distributivity of matrices over scalars. The sum of scalars, if considering the $(i,j)$th entry, is produced from the dot product of the $i$th row of $L$ with the $j$th column of $L^T$, exactly $a_{ij}$, the entry of $a$ in the $i$th row and $j$th column. Thus, the $(i,j)$th entry of $(L \otimes M)(L \otimes M)^T$ is $a_{ij}MM^T = a_{ij}B$ and thus $(L \otimes M)(L \otimes M)^T = A \otimes B$.

We need only check that the diagonal of $(L \otimes M)$ is positive. The diagonal elements of $L \otimes M$, as $M$ is lower triangular, are comprised of the diagonal elements of $L$ multiplied by the diagonal elements of $M$. As the diagonal elements of both $L$ and $M$ are positive, their product will also be positive. Thus, $A \otimes B$ has a Cholesky decomposition and is therefore positive definite. 
\end{proof}

\section{Pressing Sequences}\label{pressingsection}

\begin{definition}
Let a \textbf{$\F$-pseudograph}, for some field $\F$, be a graph $G= (V,f)$ with $V$ the set of vertices and $f: V \times V \rightarrow \F$ a function assigning a weight to each edge. That is, $f(x,y) = c$ assigns a weight of $c \in \F$ to the edge $xy$, and $f(x,x)$ is a weight assigned to the vertex $x \in V$. Each edge has only one associated weight, i.e., $f(x,y) = f(y,x)$. Note every pair of distinct vertices admits an edge, though some may simply have weight $0$ and sometimes $G$ is then identified with the restriction of $f$ to its support.
\end{definition}

For a vertex, we may refer to the vertex by its weight if the vertex label is understood. That is, if there is only one vertex of weight $d$, it may be referred to as vertex $d$. If there are more than one vertex with weight $d$, it will be referred to as vertex $v$ with weight $d$. 

\begin{definition}
The \textbf{weighted adjacency matrix} of a $\F$-pseudograph $G$ is $A(G)$ defined in the following way. Let $v_1,...,v_n$ be the vertices of $G$.
$$a_{ij} = f(v_i,v_j)$$
Clearly, the resulting matrix is symmetric.
\end{definition}

\begin{definition}
Consider a $\F-$pseudograph $G = (V,f)$. For a vertex $v$, with $f(v,v)$ positive in $\F$, \textbf{pressing} $v$ is the process of taking $G$ to $G' = (V, g)$ with $$g(x,y) = f(x,y) - \dfrac{f(x,v)f(y,v)}{f(v,v)}$$
\end{definition}
Note that such a press will also yield a symmetric weighted adjacency matrix $A(G')$.  The following figure illustrates a general press on the vertex with weight $a$:

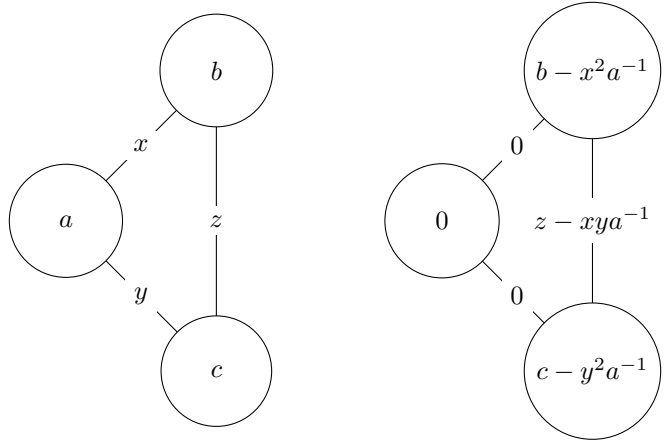
\begin{figure}[!ht]
        \begin{center}
\begin{tikzpicture}
\draw (0,0) node[circle,draw, fill=white]{\phantom{$xx$} $a$ \phantom{$xx$}} -- (2,2) node[circle,draw, fill=white]{\phantom{$xx$} $b$ \phantom{$xx$}} -- (2,-2) node[circle,draw, fill=white]{\phantom{$xx$} $c$ \phantom{$xx$}} -- (0,0);
\draw (5,0) node[circle,draw, fill=white]{\phantom{$xx$} $0$ \phantom{$xx$}} -- (7,2) node[circle,draw, fill=white]{$b-x^2a^{-1}$} -- (7,-2) node[circle,draw, fill=white]{$c-y^2a^{-1}$} -- (5,0);
\draw (1,1) node[fill=white]{$x$};
\draw (2,0) node[fill=white]{$z$};
\draw (1,-1) node[fill=white]{$y$};
\draw (6,1) node[fill=white]{$0$};
\draw (7,0) node[fill=white]{$z-xya^{-1}$};
\draw (6,-1) node[fill=white]{$0$};
\end{tikzpicture}
\end{center}

        \centering
        \caption{The weighted vertex $a$ is pressed to transform $G$ (left) to $G'$ (right)}
\end{figure}

The weighted adjacency matrices for the graphs in the figure as follows:
$$A(G) = \begin{bmatrix}
a & x & y\\
x & b & z\\
y & z & c
\end{bmatrix} \hspace{.75in} A(G') = \begin{bmatrix}
0 & 0 & 0\\
0 & b-\dfrac{x^2}{a} & z-\dfrac{xy}{a}\\
0 & z-\dfrac{xy}{a} & c- \dfrac{y^2}{a}\end{bmatrix}$$

Pressing in this fashion has the same effect as Gaussian elimination, except without row swaps, and such that the rows corresponding to pressed vertices are self-eliminated. A ``successful pressing sequence'' exists if we can complete this elimination to result in the all-zeroes matrix, which corresponds to the edgeless graph with vertices of weight 0.

\begin{theorem}
For a $\F-$pseudograph $G = (V,f)$, the vertices of $G$ in the usual order form a successful pressing sequence if and only if $A(G)$ is positive definite.
\end{theorem}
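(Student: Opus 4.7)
The plan is to induct on $n = |V|$, exploiting the fact that a single press on the first vertex $v_1$ carries out exactly one step of Cholesky factorization. Partition
\[
A(G) = \begin{bmatrix} a & \mathbf{x}^T \\ \mathbf{x} & B \end{bmatrix},
\]
so that pressing $v_1$ (which requires $a$ positive) replaces $A(G)$ with
\[
\begin{bmatrix} 0 & \mathbf{0}^T \\ \mathbf{0} & B - \mathbf{x}\mathbf{x}^T/a \end{bmatrix}.
\]
Because the first row and column are now identically zero, any subsequent press of some $v_j$ ($j \geq 2$) only involves $f(v_1,v_j)=0$ in the off-diagonal and leaves those zeros intact. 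Thus pressing $v_2,\dots,v_n$ on $G$ is literally the same as pressing $v_2,\dots,v_n$ on the $\mathbb{F}$-pseudograph whose weighted adjacency matrix is the Schur complement $B - \mathbf{x}\mathbf{x}^T/a$.

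For the forward direction, assume $v_1,\dots,v_n$ forms a successful pressing sequence in the usual order. Then $a$ is positive, and since $\mathbb{F}$ is a definite field, $a=\mu^2$ for some positive $\mu\in\mathbb{F}$. The remaining presses realize a successful sequence on the Schur complement, which by the inductive hypothesis possesses a Cholesky decomposition $L'L'^T$ with $L'$ lower triangular and positive along its diagonal. A direct block multiplication then gives
\[
A(G) = \begin{bmatrix} \mu & 0 \\ \mathbf{x}/\mu & L' \end{bmatrix}\begin{bmatrix} \mu & \mathbf{x}^T/\mu \\ 0 & L'^T \end{bmatrix},
\]
which is a Cholesky decomposition of $A(G)$ with positive diagonal.

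For the converse, assume $A(G) = LL^T$ with $L$ lower triangular and positive on the diagonal, and partition $L = \begin{bmatrix} \mu & \mathbf{0}^T \\ \mathbf{m} & L' \end{bmatrix}$. Expanding $LL^T$ forces $a = \mu^2$ (hence positive, so $v_1$ can be pressed), $\mathbf{x} = \mu\mathbf{m}$, and $B = \mathbf{m}\mathbf{m}^T + L'L'^T$, from which $B - \mathbf{x}\mathbf{x}^T/a = L'L'^T$. Thus the Schur complement is itself positive definite, and the inductive hypothesis produces a successful pressing sequence on $v_2,\dots,v_n$ in that smaller pseudograph; by the equivalence from the first paragraph, this lifts to a successful completion of the sequence on $G$ after $v_1$.

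The base case $n=1$ is immediate: $[a] = [\mu][\mu]^T$ with $\mu$ positive exactly when $a$ is positive, which is exactly the condition for $v_1$ to be pressable on its own. The main obstacle is less an obstacle than a bit of careful bookkeeping: one must verify that the pressing rule matches the Schur complement identity and that the positive-diagonal condition is preserved in both directions. The crucial place where the finite-definite hypothesis is used is in the forward direction, to extract a \emph{positive} square root $\mu$ of $a$; without definiteness the block factorization above could fail to have positive diagonal even though $a$ is a square.
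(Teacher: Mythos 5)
Your proof is correct, and it takes a genuinely different route from the paper's, though both rest on the same underlying observation that pressing is symmetric Gaussian elimination. The paper argues globally: it encodes the entire pressing sequence as a product $E$ of unit lower-triangular elementary matrices with $EA(G)E^T = D$ a positive diagonal matrix, inverts to obtain a positive $LDU$ factorization $A(G) = E^{-1}DE^{-T}$, and then invokes Lemma \ref{LDUChol} to convert this to a Cholesky decomposition; for the converse it peels the diagonal off $L$ to recover the $LDU$ form and reads that as row- and column-reduction without swaps. You instead induct on $|V|$, observing that a single press of $v_1$ performs exactly one step of block Cholesky factorization via the Schur complement $B - \mathbf{x}\mathbf{x}^T/a$, and that the zeroed-out first row and column decouple the remaining presses from $v_1$. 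What your approach buys is self-containment: you never need the $LDU$-to-Cholesky lemma, the uniqueness of the $LDU$ factorization, or facts about products of elementary matrices --- the block identity is verified by direct multiplication, and the only external input is definiteness of $\mathbb{F}$, used to extract a \emph{positive} square root of the pivot $a$ (you correctly isolate this as the one place the hypothesis matters; the paper's use of it is buried inside Lemma \ref{LDUChol}). What the paper's route buys is economy of reuse, since the $LDU$ machinery is already developed in its appendix for other results, and its formulation makes explicit that the diagonal of $D$ consists precisely of the vertex weights at the moment of pressing, i.e., the pivots of Lemma \ref{pivots}. Your bookkeeping is sound throughout: the Schur-complement identity matches the pressing rule entry by entry, and positivity of the diagonal is preserved in both directions.
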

\begin{proof}
Let $G = (V,f)$ be a $\F$-pseudograph, and $A(G)$ its weighted adjacency matrix.

Suppose the vertices of $G$ -- in the order they are presented as indices of $A(G)$ -- form a successful pressing sequence. Thus, we can perform ordinary Gaussian elimination with no row swaps via the same process but omitting self-elimination steps, resulting in an $LU$ decomposition.  Each such elimination step begins by multiplying $A(G)$ by an elementary matrix $E_1$ on the left; the elements on the diagonal of $E_1$ are $1$ as we are not changing the entry associated with the vertex pressed, and $E_1$ is also lower triangular. As $A(G)$ is symmetric, the elimination can proceed by performing the same operations on columns, represented by right multiplication by $E_1^T$. That is, after the elimination operations arising from a press without self-elimination, we have $A(G') = E_1A(G)E_1^T$. 

If $G$ has a successful pressing sequence, it has a sequence of such Gaussian elimination steps which result in a diagonal matrix. That is, $EA(G)E^T = D$ for $E$ a product of elementary matrices representing row operations where the diagonal entries of $E$ are 1 and $D$ a diagonal matrix whose entries are the weights of the vertices before being pressed. As we only press positively weighted vertices, $D$ has all positive entries. In fact, $A(G) = E^{-1}DE^{-T}$. As $E$ is lower diagonal, so is $E^{-1}$ and in a similar fashion, $E^{-T}$ is upper triangular. Thus, $A(G)$ has a positive LDU decomposition and by Lemma \ref{LDUChol}, $A(G)$ has a Cholesky decomposition and is therefore positive definite.\\

If $A(G)$ is positive definite, it has a Cholesky decomposition $A(G) = LL^T$. As the diagonal entries of $L$ are positive, we have $A(G) = L'D(L')^T$ where $L'$ has all ones along its diagonal and $D$ is a diagonal matrix with all positive entries. So $L'^{-1}A(G)L'^{-T} = D$ and $A(G)$ can be row- and column-reduced without swaps to a diagonal matrix with all positive entries. Thus $G$ has a Gaussian elimination sequence without row swaps resulting in a positive diagonal matrix and also has a successful pressing sequence by pressing the vertices in the same order.
\end{proof}

\section{Future Work}

Over $\mathbb{F}_2$ a graph has a successful pressing sequence if and only if each component contains at least one non-white vertex (\cite{CooperDavis}). This characterization does not work over other fields (for example the $\mathbb{F}_3$ pseudo-graph with three vertices, each of weight $1$, and three edges, each of weight $2$, is not pressable in any order).  Furthermore, while it is a necessary condition that each component contain a positive vertex, this characterization does not suffice.  Thus, we ask the following question: Is there a polynomial-time algorithm to check whether a given pseudo-graph over a finite field is pressable {\it in some order}?

We have discussed positive definite matrices over definite fields and described many equivalences which can be analogized from the Hermitian case, and others which cannot. It would be perhaps be interesting also to consider semi-definiteness or negative definiteness over definite fields. 

In particular, we briefly considered but never truly investigated the possibility of using the Frobenius endomorphism to define some kind of Hermitian-like structure on these matrices. That is, instead of a conjugate transpose, what would happen if we applied the Frobenius map to every element of the matrix and then take the transpose? This notion may still cause a problem in the positive definite case, as we can simply take a vector in the base field, for which the ``Frobenius transpose'' would simply be the transpose, and could still produce $x^TAx = 0$ for some nonzero vector $x$. In terms of positive semi-definiteness, however, we wonder if this can be remedied and prove interesting.  We would be especially keen to restore the important role of positive eigenvalues in the theory of positive definiteness, perhaps by redefining ``positive'' via Frobenius endomorphisms.

It would also be interesting to consider whether nondefinite fields have some semblance of a positive definite structure given the right definitions. Furthermore, we identified counterexamples above to show that some classical properties do not hold over finite definite fields.  Could there be, however, a subset of finite definite fields for which some of these properties still indeed hold? For instance, can the positive eigenvalue equivalence be salvaged over a certain subset of definite fields?

We also wonder what applications the above framework might find. As positive definite matrices are used often in optimization problems, does this notion of positive definite over definite fields induce an analogue of geometric convexity over other fields besides $\mathbb{R}$ and $\mathbb{C}$? Can we solve optimization problems over finite fields?

\section{Appendix}

\noindent Proof of Theorem \ref{apx1}
\begin{proof}
Let $A \in M_n(\fq)$ such that $A=LL^T$ for some lower triangular matrix $L \in M_n(\fq)$ whose diagonal entries are nonzero.  Let $L_i$ denote the $i^{\textrm{th}}$ leading principal submatrix of $L$.   For each $1\leq i\leq n$, let $\mu_i=\textrm{det}(L_i)$ and observe that $\mu_i\neq 0\in \fq$.  Then the $i^{\textrm{th}}$ leading principal submatrix of $A$ will have decomposition $A_i=L_iL_i^T$ and therefore $\textrm{det}(A_i)=\textrm{det}(L_i)\textrm{det}(L_i^t)=\mu^2\neq 0$.
\end{proof}

\noindent Proof of Lemma \ref{apx2}
\begin{proof}
Let $A$ be a symmetric matrix over a definite field with an $LDU$ decomposition
such that all entries of $D$ are positive and all diagonal entries of $L$ and $U$ are 1. The
symmetry of $A$ and the uniqueness of the $LDU$ decomposition will yield $U = L^T$.
As the diagonal entries of $D$ are positive, 
$\sqrt{D}$ can be defined, and we construct it in the
following way:
Set $\sqrt{D}=\textrm{diag}(s_{1,1}, \ldots, s_{n,n})$ where $s_{i,i}$ is the positive square root of the element in the $i^\textrm{th}$ row and column of $D$.  Since both $L$ and $\sqrt{D}$ have zeros above the diagonal then $B=L\sqrt{D}$ is a lower triangular matrix and $A=BB^T$, as desired.
\end{proof}

\noindent Proof of Lemma \ref{apx3}
\begin{proof}
Let $A_k$ be the $k \times k$ leading principal submatrix of an $n \times n$ matrix $A$. Let $A$ have an $LDU$ factorization, $A = LDU$, where $L$ is a lower triangular matrix with all ones along its diagonal, $U$ is upper triangular with all ones along its diagonal, and $D$ is diagonal. Note that as $A$ has an $LU$ decomposition, all leading principal submatrices have full rank and thus all leading principal minors are nonzero.

Partition $A$ in the following way:
$$A = \begin{bmatrix}
    L_k & \textbf{0} \\
    L_{21}  & L_{22} 
\end{bmatrix}  \begin{bmatrix}
    D_k & \textbf{0} \\
    \textbf{0}  & D_{22} 
\end{bmatrix} \begin{bmatrix}
    U_k & U_{12} \\
    \textbf{0}  & U_{22} 
\end{bmatrix}$$
We thus have that $A_k$ can be written in the following manner:
$$A_k = L_kD_kU_k = \begin{bmatrix}
    L_{k-1} & \textbf{0} \\
    \textbf{d}  & 1 
\end{bmatrix}  \begin{bmatrix}
    D_{k-1} & \textbf{0} \\
    \textbf{0}  & d_{kk} 
\end{bmatrix} \begin{bmatrix}
    U_{k-1} & \textbf{c} \\
    \textbf{0}  & 1 
\end{bmatrix}$$
For $k = 1$, we have $A_1 = [1][d_{11}][1]$ and thus $\det(A_{1}) = d_{11} = a_{11}$. If the result holds for $\ell< k$, we have $\det(A_k) = \det(D_{k-1})d_{kk} = \det(A_{k-1})d_{kk} = d_{11}...d_{kk}$. The result follows as the pivots are exactly the entries of $D$.
\end{proof}

\noindent Proof of Lemma \ref{apx4}
\begin{proof}
Let $A$ be a symmetric matrix over a definite field with an $LDU$ decomposition such that all entries of $D$ are positive and all diagonal entries of $L$ and $U$ are 1. The symmetry of $A$ and the uniqueness of the $LDU$ decomposition will yield $U = L^T$. As the elements of $D$ are positive, $\sqrt{D}$ can be defined, and we construct it in the following way. If $r_{ii} = \sqrt{d_{ii}}$:
\[\sqrt{D} = \text{diag}(d'_{11} d'_{22} ... d'_{nn}) = 
\begin{cases}
d'_{ii} = r_{ii} \hspace{1cm} \text{ if } r_{ii} \text{ is positive}\\
d'_{ii} = -r_{ii} \hspace{1cm} \text{ otherwise}
\end{cases}
\]
Thus, $\sqrt{D}$ is a diagonal matrix with positive diagonal entries. Define $R = L\sqrt{D}$. As $L$ has a diagonal of all 1's, $R$ is a lower triangular matrix with positive diagonal entries and $A = RR^T$ as desired.
\end{proof}

\noindent Proof of Corollary \ref{apx5}
\begin{proof}
Let $A$ be a symmetric matrix in $M_n(\mathbb{F})$ for a definite field $F$ such that all
leading principal minors are positive. Thus, all leading principal submatrices have
full rank and $A$ is invertible. So, $A = LDU$
where $D$ is a diagonal matrix and $U$ and $L$ have all ones on their diagonal. The
symmetry of $A$ and the uniqueness of the $LDU$ decomposition will yield that $U = L^T$ . 
As all leading principal minors are positive, the pivots of $A$, found by the process
described in Lemma \ref{pivots}, are positive and are, in fact, the diagonal entries of $D$. Thus, as we
have an LDU decomposition where all the diagonal elements of $D$ are positive, by Lemma \ref{LDUChol},
we can define $R = L\sqrt{D}$, a lower triangular matrix with positive diagonal entries,
and $A = RR^T$ as desired.
\end{proof}

\noindent Proof of Lemma \ref{apx6}
\begin{proof}
Let $G$ be a Gram matrix of vectors $v_1, v_2, ..., v_n$. That is, 
$$G = \begin{bmatrix}
    \langle v_1, v_1 \rangle & \langle v_1, v_2 \rangle & \cdots & \langle v_1, v_n \rangle\\
    \langle v_2, v_1 \rangle & \langle v_2, v_2 \rangle & \cdots & \langle v_2, v_n \rangle\\
    \hdots & \hdots & \ddots & \hdots\\
    \langle v_n, v_1 \rangle & \langle v_n, v_2 \rangle & \cdots & \langle v_n, v_n \rangle
\end{bmatrix}$$

Any leading principal submatrix, $G_k$ will take the form
$$G_k = \begin{bmatrix}
    \langle v_1, v_1 \rangle & \langle v_1, v_2 \rangle & \cdots & \langle v_1, v_k \rangle\\
    \langle v_2, v_1 \rangle & \langle v_2, v_2 \rangle & \cdots & \langle v_2, v_k \rangle\\
    \hdots & \hdots & \ddots & \hdots\\
    \langle v_k, v_1 \rangle & \langle v_k, v_2 \rangle & \cdots & \langle v_k, v_k \rangle
\end{bmatrix}$$
Thus, $G_k$ is a Gram matrix on the vectors $v_1, v_2, ..., v_k$ as these vectors are still linearly independent.
\end{proof}

\noindent Proof of Theorem \ref{apx7}

\begin{proof}
Let $M \in \mathcal{M}_n(\F_q).$

Suppose $M$ is a Gram matrix. Thus, $M = A^TA$ where the columns of $A$ are $x_1, x_2, ..., x_n \in \F_q^n$, which are linearly independent. Now, $M_k$ will be equivalent to $A_k^TA_k$ where $A_k$ has columns $x_1,...,x_k$ by Lemma \ref{Gram}. We have $$\det(M_k) = \det(A_k^TA_k) = \det(A_k)^2$$ for $\det(A_k)^2 \in \F_q$. As all leading principal minors are positive, $A$ is positive definite.

Now suppose that $M$ is a positive definite matrix. Thus, $M = LL^T$ with the columns of $L$ denoted by $l_1, l_2, ..., l_n$. As $M$ is invertible, so is $L$ and thus these $l_i$ are linearly independent. $M$ is therefore a Gram matrix for the vectors $l_1, l_2, ..., l_n.$
\end{proof}
\vfill

\end{document}